\let\originallesssim\lesssim
\let\originalgtrsim\gtrsim
\DeclareRobustCommand{\lesssim}{%
  \mathrel{\mathpalette\lowersim\originallesssim}%
}
\DeclareRobustCommand{\gtrsim}{%
  \mathrel{\mathpalette\lowersim\originalgtrsim}%
}
\newcommand{\lowersim}[2]{%
  \sbox\z@{$#1<$}%
  \raisebox{-\dimexpr\height-\ht\z@}{$\m@th#1#2$}%
}
\newtheorem{conj}{Conjecture}[section]
\newtheorem{thm}{Theorem}[section]
\newtheorem{lem}[thm]{Lemma}
\newtheorem{prop}[thm]{Proposition}
\newtheorem{defn}[thm]{Definition}
\newtheorem{cor}[thm]{Corollary}
\newcommand\independent{\protect\mathpalette{\protect\independent}{\perp}} 
\def\independent#1#2{\mathrel{\rlap{$#1#2$}\mkern2mu{#1#2}}}
\def\phi{\varphi}
\def\bee{\begin{eqnarray*}}
\def\ene{\end{eqnarray*}}
\begin{document}

\title{The stability of log-supermodularity under convolution}
\author{Mokshay Madiman, James Melbourne and Cyril Roberto }
\date{\today}

\begin{abstract}
    We study the behavior of log-supermodular functions under convolution.  In particular we show that log-concave product densities preserve log-supermodularity, confirming in the special case of the standard Gaussian density, a conjecture of Zartash and Robeva.  Additionally this stability gives a ``conditional'' entropy power inequality for log-supermodular random variables.  We also compare  the Ahlswede-Daykin four function theorem and a recent four function version of the Prekopa-Leindler inequality due to Cordero-Erausquin and Maurey and giving transport proofs for the two theorems.  In the Prekopa-Leindler case, the proof gives a generalization that seems to be new, which interpolates the classical three and the recent four function versions. 
\end{abstract}

\maketitle

\section{Introduction}

Motivated by a recent investigation of super-additivity properties of entropy for dependent random variables \cite{MMR26}  we consider the stability of log-supermodularity under convolution.  It is long known, see \cite{karlin-rinott80}, that the convolution of two log-supermodular functions is not necessarily log-supermodular.  However the following conjecture, poses that if one of the densities is that of a standard Gaussian random variable, the log-supermodularity should be preserved.
\begin{conj}[Zartash-Robeva \cite{ZR22}] \label{conj: ZR}
    For $g(x) = (2\pi)^{-\frac{d}{2}}e^{-|x|^2/2}$, the standard Gaussian density on $\mathbb{R}^d$, and $f$ a log-supermodular function in $L_1(\mathbb{R}^d)$, then the convolution
    $f*g$ is also log-supermodular.
\end{conj}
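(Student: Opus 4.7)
The plan is to establish the stronger claim foreshadowed in the abstract: convolution with any log-concave product density $g(x)=\prod_{i=1}^d g_i(x_i)$ preserves log-supermodularity. Since the standard Gaussian factors as a product of one-dimensional Gaussians, this specializes to Conjecture~\ref{conj: ZR}. The main tool will be the Ahlswede-Daykin four-function theorem on $(\mathbb{R}^d,\vee,\wedge)$ with respect to Lebesgue measure.

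Fix $z,w\in\mathbb{R}^d$. To show $(f*g)(z)(f*g)(w)\leq(f*g)(z\vee w)(f*g)(z\wedge w)$, I would apply Ahlswede-Daykin to the quadruple
\begin{align*}
f_1(u) &= f(u)\,g(z-u), & f_2(u) &= f(u)\,g(w-u),\\
f_3(u) &= f(u)\,g((z\vee w)-u), & f_4(u) &= f(u)\,g((z\wedge w)-u).
\end{align*}
Their integrals are exactly the four values of $f*g$ above, so the Ahlswede-Daykin conclusion $\int f_1 \cdot \int f_2 \leq \int f_3 \cdot \int f_4$ is precisely the desired inequality. The pointwise Ahlswede-Daykin hypothesis $f_1(u)f_2(v)\leq f_3(u\vee v)f_4(u\wedge v)$ then splits into an $f$-piece $f(u)f(v)\leq f(u\vee v)f(u\wedge v)$, which is the given log-supermodularity of $f$, and a $g$-piece
\begin{equation*}
g(z-u)\,g(w-v)\;\leq\; g\bigl((z\vee w)-(u\vee v)\bigr)\,g\bigl((z\wedge w)-(u\wedge v)\bigr).
\end{equation*}
The product structure of $g$ reduces this to a one-dimensional inequality in each coordinate: with $a=z_i-u_i$, $b=w_i-v_i$, $c=(z_i\vee w_i)-(u_i\vee v_i)$, $d=(z_i\wedge w_i)-(u_i\wedge v_i)$, a short case analysis on the relative orderings of $(z_i,w_i)$ and $(u_i,v_i)$ shows $a+b=c+d$ and $\{c,d\}\subset[\min(a,b),\max(a,b)]$. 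Concavity of $\log g_i$ then yields $g_i(a)g_i(b)\leq g_i(c)g_i(d)$, so multiplying over $i$ finishes the $g$-piece.

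The main obstacle, I expect, is bookkeeping rather than ideas: handling the coordinate-wise case analysis for the majorization cleanly, dealing with the zero set of $g_i$, and either citing or giving a short transport proof of the continuous Ahlswede-Daykin theorem on $\mathbb{R}^d$ with Lebesgue measure (which is trivially log-supermodular). The genuine content of the argument is that \emph{product} log-concavity is precisely what lets the $g$-inequality separate coordinate-by-coordinate into one-dimensional log-concavity statements; a non-product log-concave density would break this reduction, consistent with the classical counterexamples to general stability of log-supermodularity under convolution noted in \cite{karlin-rinott80}.
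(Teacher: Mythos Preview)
Your proposal is correct and follows essentially the same route as the paper: reduce via the continuous Ahlswede--Daykin (Batty--Bollmann) theorem to a pointwise $g$-inequality, then use the product structure to drop to a one-dimensional log-concavity fact. Your choice of $f_3,f_4$ swaps the $\vee$/$\wedge$ roles relative to the paper's labeling, but this is immaterial by the symmetry of the four-function theorem; your majorization formulation of the one-dimensional step ($a+b=c+d$ with $\{c,d\}\subset[\min(a,b),\max(a,b)]$) is exactly the alternative argument the paper mentions in the remark following its proof of Theorem~\ref{thm: logconcave products preserve LSM}.
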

This conjecture says, morally at least, that log-supermodularity is characterized by being preserved by standard Gaussian convolution.  See the discussion after Corollary \ref{cor: ZR} for more detail.

We confirm Conjecture \ref{conj: ZR} in greater generality in Theorem \ref{thm: ZR}, showing that $g$ preserves log-super\-modularity on convolution if $g$ is a log-concave product density.  We prove that a discrete analog holds for the integer lattice as well.  Key in both arguments is the Ahlswede-Daykin four function theorem (adapted to Euclidean setting in the context of the conjecture). Leaning on the results of \cite{MMR26} we show how the confirmation of Conjecture \ref{conj: ZR} can be used to deliver a ``conditional entropy power''.  

We sketch the easy fact that the classical Ahlswede-Daykin four function theorem, stated for a finite lattice, implies the Euclidean version (see also \cite{BB80}) that we utilize in the proof of the conjecture, before investigating a Transport based proof for the result in $\mathbb{R}^d$.  

We compare the Ahlswede-Daykin four function theorem to a more recent ``four function theorem'' due to Cordero-Erausquin and Maurey \cite{cordero2017some} that resembles the classical version of the Prekopa-Leindler inequality.  In the process of exploring this analogy, a ``non-linear'' extension of the Prekopa-Leindler inequality that interpolates the four function result in \cite{cordero2017some} and a non-linear version of the classical three function version of Prekopa-Leindler is given.  Let us outline what remains.

In Section \ref{sec: definitions} we give definitions, notation, and a bit of background.  In Section \ref{sec: stability of conv} we show that log-concave product densities preserve log-supermodularity in $\mathbb{R}^d$ and $\mathbb{Z}^d$ recovering the Conjecture \ref{conj: ZR} in the special case of a standard Gaussian density.  In Section \ref{sec: LSM EPI} we detail the connections between the stability of log-supermodularity under (standard) Gaussian convolution and a conditional entropy power inequality for dependent random variables.  In Section \ref{sec: Transport}, drawing on \cite{GRST21} we give a transport proof of the  Ahlswede-Daykin in Euclidean setting.  We conclude in Section \ref{sec: ADI PLI} exploring the analogy between Ahlswede-Daykin and the mentioned four function theorem of Cordero-Erausquin and Maurey, giving a non-linear extention of the latter.

\section{Defintions} \label{sec: definitions}
When a non-negative function $f$ on $\mathbb{R}^d$ belongs to $L_1(\mathbb{R}^d)$, we will say it is a density.  If $\int f(z) dz = 1$, we will call $f$ a probability density.  We will abbreviate when convenient the Lebesgue measure of a function $f$ on $\mathbb{R}^d$, $\int_{\mathbb{R}^d} f(z) dz $ by just $\int f$.  A function $f$ is log-concave when $t \in (0,1)$ and $x,y \in \mathbb{R}^d$ imply that
\[
    f((1-t) x + ty) \geq f^{1-t}(x) f^t(y).
\]
We will call a density $f: \mathbb{R}^d \to [0,\infty)$ a product density if it admits a decomposition $f(z) = \prod_{i=1}^d f_i(z_i)$ for $f_i: \mathbb{R} \to [0,\infty)$.

\begin{defn}
A function $u \colon \mathbb{R}^d \to (0,\infty)$ is said to be log-supermodular if for all $x=(x_1,\dots,x_d), y=(y_1,\dots,y_d) \in \mathbb{R}^d$ it holds
$$
u(x) u(y) \leq u(x \wedge y) u(x \vee y) 
$$
where 
 $x \vee y \in \mathbb{R}^d$ denotes the componentwise maximum of $x$ and $y$ and $x \wedge y \in \mathbb{R}^d$
denotes the componentwise minimum of $x$ and $y$. Namely, if $x=(x_{1},\dots,x_{d})$ 
and $y=(y_{1},\dots,y_{d})$,
$$
(x \wedge y)_{i} = \min(x_{i},y_{i})
$$
and
$$
(x \vee y)_{i} = \max(x_{i},y_{i}).
$$
\end{defn}

Densities of this form have been  widely studied in 
various fields of mathematics-- they are sometimes refered to as MTP$_2$ (multivariate totally positive of order 2). The introduction of \cite{ZR22} provides on the literature and discussion. 

We note that $u$ being log-supermodular corresponds to $u$ possessing a submodular potential in the sense that $V \coloneqq - \log u$ satisfies 
\[
    V(x) + V(y) \geq  V( x \wedge y) + V( x \vee y) .
\]
Submodular functions have been deeply studied in combinatorial optimization (see, e.g., \cite{Fuj05:book}) are of importance to information theory (see, e.g., \cite{MT10, MK18}), 
and more recently have found use in convex geometry \cite{FMMZ18, FMMZ22, FMZ22}.

For an Abelian group $G$ with Haar measure $dz$, and functions $f,g: G \to \mathbb{R}$ we write the usual convolution $f*g : G \to \mathbb{R}$ by $f*g(x) \coloneqq \int_G f(x-z) g(z) dz$.  When the significance is clear from context, we will abbreviate integrals by $ \int f \coloneqq \int_G f(z) dz$.

Let us also recall the usual Shannon entropy of a random variable $X$.

\begin{defn} \label{def:shannon}
    For a random variable $X$ with probability density function $f$ on $\mathbb{R}^d$ we denote the Shannon entropy of $X$ by
    \[
        H(X) \coloneqq - \int f \log f 
    \]
    provided the integral is well defined.
\end{defn}

\section{Stability of Convolution} \label{sec: stability of conv}
We will have use for the following Euclidean version, due to Batty and Bollmann \cite{BB80}, of the four function theorem of Ahlswede and Daykin \cite{AD78}.  

\begin{thm}[Batty-Bollmann \cite{BB80}] \label{thm: four function continous}
For $L_1$ functions $f_i: \mathbb{R}^d \to [0,\infty)$ such that
\[
    f_1(x) f_2(y) \leq f_3(x \wedge y) f_4(x \vee y),
\]
it holds that
\[
       \int f_1 \int f_2 \leq \int f_3 \int f_4,
\]
where $\int f_i$ is integration with respect to the usual Lebesgue measure.
\end{thm}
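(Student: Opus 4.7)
The plan is to reduce the Euclidean statement to the classical (finite lattice) Ahlswede-Daykin four function theorem via a straightforward discretization-and-limit argument.

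First, I would set up the discretization. Fix a large cube $Q_N = [-N,N]^d$ and a mesh $1/n$, and consider the lattice $L_{n,N} = \tfrac{1}{n}\mathbb{Z}^d \cap Q_N$. For each $k \in L_{n,N}$ let $C_k = k + [0,1/n)^d$ and define
\[
\tilde f_i(k) \coloneqq \sup_{z \in C_k} f_i(z), \qquad i=1,2,3,4.
\]
The key observation is that the cube decomposition is compatible with the lattice structure: for any $x \in C_k$ and $y \in C_l$, the componentwise min and max satisfy $x \wedge y \in C_{k \wedge l}$ and $x \vee y \in C_{k \vee l}$, because min/max act coordinatewise. Hence
\[
\tilde f_1(k)\,\tilde f_2(l) \;=\; \sup_{x \in C_k,\,y \in C_l} f_1(x) f_2(y) \;\leq\; \sup_{x \in C_k,\,y \in C_l} f_3(x\wedge y) f_4(x\vee y) \;\leq\; \tilde f_3(k\wedge l)\,\tilde f_4(k\vee l),
\]
where the first equality uses that $x,y$ vary independently and the last inequality uses the cell inclusions noted above. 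This is exactly the hypothesis of the finite-lattice four function theorem on $L_{n,N}$.

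Second, I would invoke the classical Ahlswede-Daykin theorem on $L_{n,N}$ with counting measure and then divide each sum by $n^d$ to obtain
\[
\left(\sum_k \tilde f_1(k) n^{-d}\right)\!\left(\sum_l \tilde f_2(l) n^{-d}\right) \;\leq\; \left(\sum_k \tilde f_3(k) n^{-d}\right)\!\left(\sum_l \tilde f_4(l) n^{-d}\right).
\]
Each side is now a product of upper Darboux sums over $Q_N$. Taking $n \to \infty$ and then $N \to \infty$ would yield the desired inequality.

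The one genuine obstacle is justifying the passage to the limit in full generality: the upper Darboux sums $\sum_k \tilde f_i(k) n^{-d}$ converge to $\int_{Q_N} f_i$ only under some regularity of $f_i$. I would handle this by first reducing to nice representatives: approximate each $f_i \in L_1$ from below by continuous compactly supported $\leq f_i$, which preserves the four-function inequality (replacing $f_3, f_4$ by upper regularizations such as $\sup$ over small neighborhoods, which are upper semicontinuous and dominate $f_3, f_4$). For upper semicontinuous $f_3, f_4$ and continuous $f_1, f_2$, the upper Darboux sums on $Q_N$ converge to the Lebesgue integrals as $n \to \infty$; then monotone or dominated convergence in $N$ recovers $\int_{\mathbb{R}^d}$. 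A density argument together with the monotonicity of both sides in $f_1, f_2$ recovers the case of general $L_1$ inputs. This measure-theoretic cleanup is the only delicate point; the algebraic content of the proof is entirely contained in the lattice compatibility displayed above combined with the classical Ahlswede-Daykin theorem.
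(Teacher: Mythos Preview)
Your proposal is correct and matches the paper's first proof (labeled ``Sketch of proof''): discretize to $\varepsilon\mathbb{Z}^d\cap[-n,n]^d$, apply the finite Ahlswede--Daykin theorem, and take $\varepsilon\to 0$ then $n\to\infty$; the paper uses pointwise restriction to the lattice rather than cell suprema and explicitly confines the sketch to Riemann integrable $f_i$, which is exactly the regularity issue you flag. The paper also supplies a second, self-contained transport proof in Section~\ref{sec: Transport} (a displacement-convexity-of-entropy inequality, Proposition~\ref{prop:displacement-convexity}, combined with the dual variational formula for the log-Laplace transform), which is a genuinely different route.
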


The original four function theorem of Ahlswede and Daykin \cite{AD78} is  classically stated on a finite distributive lattice.

\begin{thm}[Ahswede-Daykin \cite{AD78}] \label{thm: discrete four function}
    For $f_i$ functions on a finite distributive lattice such that
    \[
        f_1(x) f_2(y) \leq f_3(x \wedge y ) f_4(x \vee y)
    \]
    then
    \[
        \int f_1 \int f_2 \leq \int f_3 \int f_4,
    \]
    where $\int f_i$ denotes integration with respect to the counting measure.
\end{thm}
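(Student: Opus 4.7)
The plan is the standard two-step argument for Ahlswede-Daykin. I would first reduce to the Boolean lattice $\{0,1\}^n$ by means of Birkhoff's representation theorem: any finite distributive lattice $L$ embeds as a sublattice of $\{0,1\}^n$ for some $n$, and extending each $f_i$ by $0$ outside $L$ preserves the hypothesis. Indeed, for $x,y \in L$ the points $x\wedge y$ and $x\vee y$ remain in $L$ since $L$ is a sublattice, while if $x$ or $y$ lies outside $L$ the left-hand side vanishes. The counting-measure sums are also unchanged by the extension, so the task reduces to proving the theorem on $\{0,1\}^n$.

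On $\{0,1\}^n$ I would proceed by induction on $n$, with the real content in the base case $n=1$, which I expect to be the main obstacle. There the hypothesis amounts to the four scalar inequalities
\[
f_1(i)f_2(j) \leq f_3(i\wedge j)f_4(i\vee j), \qquad i,j \in \{0,1\},
\]
and the goal becomes
\[
(f_1(0)+f_1(1))(f_2(0)+f_2(1)) \leq (f_3(0)+f_3(1))(f_4(0)+f_4(1)).
\]
This is purely algebraic; my approach would be to split on the vanishing of $f_3(0)$ and $f_4(1)$. If either is zero, then the hypothesis forces the relevant products on the left to vanish and the inequality collapses; otherwise, expanding both sides and using the four hypothesized inequalities to bound the four cross terms on the left against the four cross terms on the right finishes the job after a short manipulation.

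For the inductive step from $n-1$ to $n$, define the marginalized functions on $\{0,1\}^{n-1}$ by
\[
F_i(x') \coloneqq f_i(x',0) + f_i(x',1).
\]
To apply the inductive hypothesis I must verify that the $F_i$ satisfy the four-function inequality on $\{0,1\}^{n-1}$. Fix $x',y' \in \{0,1\}^{n-1}$ and consider the four single-variable functions $g_1(t) = f_1(x',t)$, $g_2(t) = f_2(y',t)$, $g_3(t) = f_3(x'\wedge y',t)$, $g_4(t) = f_4(x'\vee y',t)$ on $\{0,1\}$. The identities $(x',s)\wedge(y',t) = (x'\wedge y', s\wedge t)$ and $(x',s)\vee(y',t) = (x'\vee y', s\vee t)$ let me deduce the four-function hypothesis for the $g_i$ from that of the $f_i$. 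Applying the base case $n=1$ to the $g_i$ yields precisely
\[
F_1(x')F_2(y') \leq F_3(x'\wedge y')F_4(x'\vee y'),
\]
so the $F_i$ satisfy the hypothesis on $\{0,1\}^{n-1}$. The inductive hypothesis then gives $\sum F_1 \sum F_2 \leq \sum F_3 \sum F_4$, and since $\sum_{\{0,1\}^{n-1}} F_i = \sum_{\{0,1\}^n} f_i$ by Fubini, the induction closes.
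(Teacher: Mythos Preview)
The paper does not give its own proof of this statement: Theorem~\ref{thm: discrete four function} is quoted as the classical Ahlswede--Daykin theorem with a citation to \cite{AD78}, and the surrounding text only sketches how the \emph{continuous} version (Theorem~\ref{thm: four function continous}) follows from it by discretization, while Section~\ref{sec: Transport} gives a transport proof of the continuous version directly. So there is nothing in the paper to compare your argument against.

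Your outline is the standard proof and is correct in structure: Birkhoff reduces to $\{0,1\}^n$, the inductive step via one-coordinate marginalization is exactly right, and the base case $n=1$ is indeed where the content lies. One caution on your sketch of that base case: the phrase ``bound the four cross terms on the left against the four cross terms on the right'' understates what is needed. Writing $a_i=f_1(i)$, $b_i=f_2(i)$, $c_i=f_3(i)$, $d_i=f_4(i)$, the two mixed hypotheses both yield the \emph{same} bound $a_0b_1,\,a_1b_0\le c_0d_1$, and no hypothesis touches $c_1d_0$ directly, so a term-by-term comparison does not close. The usual ``short manipulation'' is to reduce to showing $c_0d_1+c_1d_0\ge a_0b_1+a_1b_0$ and then, when $c_0d_1>0$, multiply through by $c_0d_1$ and use $c_0d_0\cdot c_1d_1\ge a_0b_0\cdot a_1b_1$ to factor the expression as $(c_0d_1-a_0b_1)(c_0d_1-a_1b_0)\ge 0$. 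With that trick made explicit your proof is complete.
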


See \cite{AD79, Gra83, FS00, AS16:book} for more background on the inequality's significance and utility. There is also a $2m$-function generalization of the Ahlswede-Daykin theorem \cite{RS93} (see also \cite{RS92, AK96}); a different extension is considered in \cite{Pin19}.

Note that the celebrated FKG inequality \cite{FKG71} is a special case of the Ahlswede-Daykin theorem (where all 4 functions coincide); the continuous version of the FKG inequality was developed a long time ago \cite{Pre74} (see also \cite{Bat76, Edw78, BR80} where extensions to countable product spaces as well as various applications are detailed).

A new and self-contained proof of Theorem \ref{thm: four function continous} and further generalities will be given in Section \ref{sec: Transport}.
Here we wish to prove that, in the case that the $f_i$ are Riemann integrable, Theorem \ref{thm: four function continous} can be obtained via a straightforward limiting argument from the finite version above.  

\begin{proof}[Sketch of proof of Theorem \ref{thm: four function continous}]
    For $\varepsilon >0$ the finite distributive lattice $\varepsilon \mathbb{Z}^d \cap [-n,n]^d$ and $\tilde{f}_i:\varepsilon \mathbb{Z}^d \cap [-n,n]^d \to [0,\infty)$ defined as the restriction of $f_i$ to the lattice, $\tilde{f}_i(x) = f_i(x)$, satisfy,
    \[
        \tilde{f}_1(x) \tilde{f}_2(y) \leq \tilde{f}_3(x \wedge y) \tilde{f}_4(x \vee y)
    \]
    so that by Ahlswede-Daykin
    \[
        \int \tilde{f}_1 \int \tilde{f}_2 \leq \int \tilde{f}_3 \int \tilde{f}_4,
    \]
    hence taking the limit with $\varepsilon \to 0$, 
    \[
        \varepsilon^{-2d} \int \tilde{f}_1 \int \tilde{f}_2 \leq \varepsilon^{-2d}\int \tilde{f}_3 \int \tilde{f}_4,
    \]
    we have
    \[
        \int \mathbbm{1}_{[-n,n]^d} f_1 \int  \mathbbm{1}_{[-n,n]^d} f_2 \leq \int  \mathbbm{1}_{[-n,n]^d} f_3 \int  \mathbbm{1}_{[-n,n]^d} f_4.
    \]
    Taking $n \to \infty$ completes the proof.
\end{proof}

The following lemma uses Theorem \ref{thm: four function continous} to provide sufficient conditions for a density function to preserve log-supermodularity.
\begin{lem} \label{lem: sufficient condition for log-supermodularity preservation}
    Let density function $g: \mathbb{R}^d \to [0,\infty)$ be such that
    \begin{equation} \label{eq: local condition for stable logsupermodularity}
        g(x-u) g(y - w) \leq g( x \wedge y - u \wedge w) g( x \vee y - u \vee w),
    \end{equation}
    for all $x,y,u,w \in \mathbb{R}^d$
    then if $f$ is log-supermodular, so is $f*g$.
\end{lem}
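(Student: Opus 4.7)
The plan is to reduce the log-supermodularity of $f*g$ at an arbitrary pair $x,y\in\mathbb{R}^d$ to a single application of the Batty--Bollmann four function theorem (Theorem \ref{thm: four function continous}), with the four auxiliary functions built from $f$ and translates of $g$.

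Fix $x,y\in\mathbb{R}^d$. I would introduce the four functions of a single variable $z\in\mathbb{R}^d$ given by
\begin{align*}
f_1(z) &:= f(z)\,g(x-z), & f_2(z) &:= f(z)\,g(y-z), \\
f_3(z) &:= f(z)\,g(x\wedge y - z), & f_4(z) &:= f(z)\,g(x\vee y - z).
\end{align*}
With this choice, a change of variable shows that $\int f_1 = (f*g)(x)$, $\int f_2 = (f*g)(y)$, $\int f_3 = (f*g)(x\wedge y)$, and $\int f_4 = (f*g)(x\vee y)$. Hence the conclusion $\int f_1 \int f_2 \leq \int f_3 \int f_4$ delivered by Theorem \ref{thm: four function continous} is precisely the log-supermodularity inequality $(f*g)(x)(f*g)(y)\leq (f*g)(x\wedge y)(f*g)(x\vee y)$.

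It then remains to verify the pointwise hypothesis of Theorem \ref{thm: four function continous}, namely that $f_1(u)f_2(w)\leq f_3(u\wedge w)f_4(u\vee w)$ for all $u,w\in\mathbb{R}^d$. Expanding both sides,
\[
f_1(u)f_2(w) = f(u)f(w)\,g(x-u)\,g(y-w),
\]
\[
f_3(u\wedge w)f_4(u\vee w) = f(u\wedge w)f(u\vee w)\,g(x\wedge y - u\wedge w)\,g(x\vee y - u\vee w).
\]
The log-supermodularity of $f$ gives $f(u)f(w)\leq f(u\wedge w)f(u\vee w)$, while the hypothesis \eqref{eq: local condition for stable logsupermodularity} applied with the points $(x,y,u,w)$ gives $g(x-u)g(y-w)\leq g(x\wedge y - u\wedge w)g(x\vee y - u\vee w)$. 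Multiplying these two nonnegative inequalities yields the required bound.

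There is no real obstacle in this argument beyond choosing the right four functions; the only mild care needed is to ensure the $f_i$ lie in $L_1(\mathbb{R}^d)$ so that Theorem \ref{thm: four function continous} applies, which is immediate since $\int f_i$ is the convolution $f*g$ evaluated at a point (well defined a.e.\ under the standing assumptions $f,g\in L_1$) and a standard approximation or restriction to the set where $f*g$ is finite handles the remaining measurability issues.
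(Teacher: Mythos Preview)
Your proof is correct and follows essentially the same argument as the paper: the paper defines the identical four auxiliary functions $f_1,\dots,f_4$, reduces the log-supermodularity of $f*g$ at the fixed pair $x,y$ to the integral inequality $\int f_1 \int f_2 \le \int f_3 \int f_4$, and verifies the pointwise hypothesis of Theorem~\ref{thm: four function continous} by combining the log-supermodularity of $f$ with condition~\eqref{eq: local condition for stable logsupermodularity}. The only addition you make is the brief remark on integrability, which the paper omits.
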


\begin{proof}
    Fix, $x,y \in \mathbb{R}^d$ and define 
    \begin{align*}
        f_1(z) &\coloneqq f(z) g(x - z)
            \\
        f_2(z) &\coloneqq f(z) g(y-z) 
            \\
        f_3(z) &\coloneqq f(z) g(x \wedge y - z)
            \\
        f_4(z) &\coloneqq f(z) g(x \vee y - z).
    \end{align*}
    Then $f*g$ is log-supermodular if for all $x,y$ 
    \[
        f*g(x) \  f*g(y) \leq f*g(x \wedge y ) \ f*g(x \vee y)
    \]
   which precisely means
    \[
        \int f_1 \int f_2 \leq \int f_3 \int f_4, 
    \]
    so that by Theorem \ref{thm: four function continous} it suffices to check that
    \[
        f_1(u) f_2(w) \leq f_3( u \wedge w) f_4(u \vee w).
    \]
    This is exactly 
    \[
    f(u) g(x - u) f(w) g(y - w) \leq f(u \wedge w) g(x \wedge y  - u \wedge w) f( u \vee w)  g(x \vee y - u \vee w),
    \]
    Thus, using the log-supermodularity of $f$, the log-supermodularity of $f*g$ would follow from
    \[
    g(x - u) g(y - w) \leq g(x \wedge y  - u \wedge w)  g(x \vee y - u \vee w),
    \]
    which is exactly our hypothesis.
\end{proof}
Note that \eqref{eq: local condition for stable logsupermodularity} is equivalent to $u:(\mathbb{R}^d)^2 \to [0,\infty)$ defined by
\[
    u(x,y) = g(x-y)
\]
being log-supermodular.  Writing $V = \log g$ and the standard basis for $\mathbb{R}^{2d}$ as $$(e_1(x), \dots, e_n(x), e_1(y), \dots,e_n(y)),$$ in the case that $u$ is smooth we can apply the differentiable characterization of submodularity \cite{Topkis1978}.  Namely that for $V$ supermodular and smooth $\partial_{ij} V \geq 0$ for $i \neq j$.  Differentiating in the directions $e_i(x)$ and $e_j(x)$ for $i \neq j$ we must have
\[
    \frac{\partial^2}{\partial x_i \partial{x_j}}  \log u(x,y) = \frac{\partial^2V}{\partial x_i \partial x_j} \geq 0
\]
while differentiating in the directions $e_i(x)$ and $e_j(y)$ forces
\[
   \frac{\partial^2V}{\partial x_i \partial x_j} \leq 0
\]
so that $\partial_i V$ is constant with respect to the $j$-th coordinate for $j \neq i$.  It follows that there exist $V_i: \mathbb{R} \to \mathbb{R}$ such that $V$ can be described as
\[
    V(x) = V_1(x_1) + \cdots + V_n(x_n).
\]
Moreover, differentiating in  $e_i(x)$ and $e_i(y)$ we have $V_i'' \leq 0$.  This suggests that at least as a consequence of Ahlswede-Daykin, $g$ being a log-concave product density is a necessary condition for $g$ to preserve log-supermodularity on convolution.  The following result shows that this condition is sufficient.
\begin{thm} \label{thm: logconcave products preserve LSM}
    For $g$ a log-concave product density on $\mathbb{R}^d$, then $f$ log-supermodular implies $f*g$ is log-supermodular as well.
\end{thm}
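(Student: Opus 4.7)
The plan is to invoke the preceding lemma and check its hypothesis \eqref{eq: local condition for stable logsupermodularity} for the given $g$. Writing $g(z) = \prod_{i=1}^d g_i(z_i)$ with each $g_i \colon \mathbb{R} \to [0,\infty)$ log-concave, and using that $\wedge$ and $\vee$ act coordinate-wise, the inequality
\[
g(x-u)\, g(y-w) \;\leq\; g(x \wedge y - u \wedge w)\, g(x \vee y - u \vee w)
\]
factorizes as a product over $i$ of the corresponding inequalities for $g_i$. So it suffices to prove the one-dimensional statement: for every log-concave $h \colon \mathbb{R} \to [0,\infty)$ and all $a,b,c,d \in \mathbb{R}$,
\[
h(a-c)\, h(b-d) \;\leq\; h(a \wedge b - c \wedge d)\, h(a \vee b - c \vee d).
\]

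The next step is a short case analysis on the orderings of $\{a,b\}$ and $\{c,d\}$. When the two pairs are ordered consistently, that is $a \leq b$ together with $c \leq d$, or $a \geq b$ together with $c \geq d$, the right-hand side equals the left-hand side after a permutation of factors, so equality holds trivially. The substance is in the crossed case; by symmetry it is enough to treat $a \leq b$ and $c \geq d$, where the desired inequality becomes $h(a-c)\, h(b-d) \leq h(a-d)\, h(b-c)$.

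To handle this crossed case, introduce $p := a-c$, $q := b-d$, $r := a-d$, $s := b-c$. A direct computation gives the midpoint identity $p+q = r+s = a+b-c-d$, and the assumptions $a \leq b$, $c \geq d$ yield $p \leq r \leq q$ and $p \leq s \leq q$. Consequently there exists $t \in [0,1]$ with $r = (1-t)p + tq$ and $s = tp + (1-t)q$; applying log-concavity of $h$ at $r$ and at $s$ and adding the two inequalities gives $\log h(r) + \log h(s) \geq \log h(p) + \log h(q)$, which is precisely what we need.

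In essence, the proof has no real obstacle: the sufficient condition of the lemma reduces the problem, via the product structure, to the one-dimensional statement, and the latter is a familiar rearrangement-style consequence of log-concavity. The only thing one needs to be careful with is the coordinate-wise bookkeeping and the verification that the crossed case is the only genuinely nontrivial one. The same scheme, with sums replaced by the lattice operations on $\mathbb{Z}$ and log-concavity replaced by its discrete analogue, yields the lattice version mentioned earlier in the paper.
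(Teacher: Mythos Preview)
Your proof is correct and follows essentially the same route as the paper: reduce via the lemma to the coordinate-wise condition, factor into the one-dimensional case, note that matching orderings give equality, and treat the crossed case via log-concavity. The only cosmetic difference is that the paper phrases the crossed case using increasing difference quotients of $V=-\log h$, while you use the equivalent ``same sum, narrower spread'' convex-combination argument---which the paper itself mentions as an alternative in the remark following its proof.
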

\begin{proof}
    By Lemma \ref{lem: sufficient condition for log-supermodularity preservation} it suffices to check \eqref{eq: local condition for stable logsupermodularity}.  Since $g$ is a product density, given by $g(z) = \prod_{i=1}^d g_i(z_i)$ for $g_i$ log-concave, \eqref{eq: local condition for stable logsupermodularity} is exactly
    \[
        \prod_{i=1}^d g_i(x_i - u_i)g_i (y_i - w_i) \leq  \prod_{i = 1}^d g_i(x_i \wedge y_i  - u_i \wedge w_i)  g_i(x_i \vee y_i - u_i \vee w_i).
    \]
    Thus it suffices to prove \eqref{eq: local condition for stable logsupermodularity} when $g$ is a log-concave density on $\mathbb{R}$, or equivalently, that for a convex function $V: \mathbb{R} \to \mathbb{R}$,
    \begin{equation} \label{eq: convex function statement for LSM}
        V(x- u) + V(y -w) \geq V(x \wedge y - u \wedge w) + V( x \vee y - u \vee w). 
    \end{equation}
    To this end, without loss of generality, we may assume $x \leq y$ and note that if $u \leq w$ \eqref{eq: convex function statement for LSM} is equality.  It remains to assume $u > w$ and thus prove
    \[
        V(x - u) + V(y-w) \geq V(x - w) + V( y - u).
    \]
    or equivalently
    \[
        \frac{V(y-w)- V(x-w)}{y - x} \geq \frac{V(y-u)- V(x-u)}{y - x}
    \]
    which follows directly from the convexity of $V$.
\end{proof}
We note that in the proof, $V(x-u) + V(y-w) \geq V(x-w) + V(y-u)$ for $x \leq y$ and $u > w$ can also be obtained equivalently by the fact that for a convex function $V$, $V(a) + V(b) \geq V(c) + V(d)$ holds when $a + b = c+d$ and $|b-a| \geq |d - c|$.  For convex functions, spreading out points with the same sum increases the sum of their values.

Since the standard Gaussian is both log-concave and a product density, we confirm conjecture \ref{conj: ZR}.

\begin{cor} \label{cor: ZR}
    For $g$ the standard Gaussian density on $\mathbb{R}^d$, and $f$ log-supermodular,
    $f*g$ is also log-supermodular.
\end{cor}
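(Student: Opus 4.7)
The plan is to invoke Theorem \ref{thm: logconcave products preserve LSM} directly, with essentially no additional work. All that remains is to verify that the standard Gaussian density on $\mathbb{R}^d$ satisfies the two hypotheses of that theorem: that it is a product density and that it is log-concave.

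For the product structure, I would write
\[
g(x) = (2\pi)^{-d/2}e^{-|x|^2/2} = \prod_{i=1}^d (2\pi)^{-1/2} e^{-x_i^2/2},
\]
which exhibits $g$ as a product of the one-dimensional factors $g_i(t) = (2\pi)^{-1/2}e^{-t^2/2}$. For log-concavity, I would observe that the potential $V(x) = -\log g(x) = \tfrac{1}{2}|x|^2 + \tfrac{d}{2}\log(2\pi)$ is (strictly) convex; equivalently, each factor $g_i$ has the convex potential $t \mapsto t^2/2 + \tfrac{1}{2}\log(2\pi)$. Once both properties are verified, Theorem \ref{thm: logconcave products preserve LSM} applies verbatim and yields that $f*g$ is log-supermodular whenever $f$ is.

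There is essentially no obstacle here, since all of the substantive work has already been absorbed into Theorem \ref{thm: logconcave products preserve LSM} and Lemma \ref{lem: sufficient condition for log-supermodularity preservation}. The only minor point worth noting in passing is that the hypothesis $f \in L_1(\mathbb{R}^d)$ from Conjecture \ref{conj: ZR}, combined with the boundedness and strict positivity of $g$, guarantees that $f*g$ is well-defined and strictly positive wherever needed, so that the log-supermodularity inequality makes pointwise sense. This establishes Conjecture \ref{conj: ZR} as a special instance of the broader stability result.
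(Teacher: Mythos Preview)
Your proposal is correct and follows exactly the paper's approach: the corollary is obtained immediately from Theorem \ref{thm: logconcave products preserve LSM} once one notes that the standard Gaussian density is a log-concave product density.
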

Via a homogeneity argument, modulo measure theory, that log-supermodular densities are characterized by the fact that their convolution with any scaled standard Gaussian random variable remains log-supermodular.  Indeed, if $f*g$ is guaranteed to be log-supermodular when $f$ is log-supermodular and  $g$ is standard Gaussian $g$ then for $g_t(x) = t^{-d}g(x/t)$, $f*g_t(x) = (\tilde{f}*g)(x/t)$ for the necessarily (granted that $f$ is) log-supermodular function $\tilde{f}(x) = f(tx)$. Thus, the stability log-supermodularity of Gaussian convolution is augmented to an stability of any scaling of Gaussian convolution.  Conversely if $f$ is a density that is log-supermodular after convolution with $g_t$ for $t >0$, taking $\lim_{t\to 0} f*g_t$ we see that (at least almost everywhere, by Lebesgue differentiation) that $f$ is log-supermodular.

    The ideas above can be easily adapted to the $d$-dimensional integer lattice as well.  We consider $f: \mathbb{Z}^d \to [0,\infty)$ to be log-supermodular when $f(x)f(y) \leq f( x \wedge y ) f(x \vee y )$ holds for all $x,y \in \mathbb{Z}^d$, considered as a subset of $\mathbb{R}^d$.
\begin{defn}
    A function $g: \mathbb{Z} \to [0,\infty)$ is log-concave if $\{ g > 0 \} = I \cap \mathbb{Z}$ for some (possibly infinite) interval $I \subseteq \mathbb{R}$ and
    \[
        g^2(n) \geq g(n+1) g(n-1) \qquad \qquad \forall n \in I.
    \]
\end{defn}
This definition on $\mathbb{Z}$ is equivalent to the existence of  a log-concave function on $\mathbb{R}$ interpolating $g$.  Similarly, we will consider $V: \mathbb{Z} \to \mathbb{R} \cup \{ \infty \}$  to be convex if there exists a convex function on $\mathbb{R}$ interpolating $g$.

A function $g : \mathbb{Z}^d \to [0,\infty)$ will be a called a $\mathbb{Z}^d$ product density if it admits a decomposition into $d$ functions $g_i :\mathbb{Z} \to [0,\infty)$ such that 
\[
    g(z) = \prod_{i=1}^d g_i(z_i).
\]

\begin{lem} \label{lem: discrete sufficient condition for log-supermodularity preservation}
    Let density function $g: \mathbb{Z}^d \to [0,\infty)$ be such that
    \begin{equation} \label{eq: discrete local condition for stable logsupermodularity}
        g(x-u) g(y - w) \leq g( x \wedge y - u \wedge w) g( x \vee y - u \vee w),
    \end{equation}
    for all $x,y,u,w \in \mathbb{Z}^d$
    then if $f$ is log-supermodular on $\mathbb{Z}^n$, so is $f*g$.
\end{lem}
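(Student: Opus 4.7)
The plan is to mirror the proof of Lemma \ref{lem: sufficient condition for log-supermodularity preservation} almost verbatim, with the Euclidean four function theorem replaced by the original discrete Ahlswede-Daykin theorem (Theorem \ref{thm: discrete four function}), and with one extra truncation step to pass from finite distributive sublattices of $\mathbb{Z}^d$ to all of $\mathbb{Z}^d$.

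More precisely, I would fix $x,y \in \mathbb{Z}^d$ and set
\[
    f_1(z) := f(z) g(x-z), \quad f_2(z) := f(z) g(y-z),
\]
\[
    f_3(z) := f(z) g(x \wedge y - z), \quad f_4(z) := f(z) g(x \vee y - z),
\]
where now $z$ ranges over $\mathbb{Z}^d$. The desired log-supermodularity $(f*g)(x) (f*g)(y) \leq (f*g)(x\wedge y)(f*g)(x \vee y)$ is then exactly the statement $\sum_{z} f_1(z) \sum_{z} f_2(z) \leq \sum_{z} f_3(z) \sum_{z} f_4(z)$, where sums are over $\mathbb{Z}^d$ (i.e.\ integration against counting measure). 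The pointwise four-function inequality
\[
    f_1(u) f_2(w) \leq f_3(u \wedge w) f_4(u \vee w), \qquad u,w \in \mathbb{Z}^d,
\]
reduces, after canceling the $f$ factors using the log-supermodularity of $f$, to precisely the hypothesis \eqref{eq: discrete local condition for stable logsupermodularity} on $g$.

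To apply Theorem \ref{thm: discrete four function}, which is stated for a \emph{finite} distributive lattice, I would truncate: for each $n \in \mathbb{N}$, let $\Lambda_n := [-n,n]^d \cap \mathbb{Z}^d$, which is a finite distributive sublattice of $\mathbb{Z}^d$ (closed under $\wedge$ and $\vee$), and define $\tilde f_i^{(n)} := f_i \mathbbm{1}_{\Lambda_n}$. The pointwise inequality above, restricted to $u,w \in \Lambda_n$, is unchanged (since $u \wedge w$ and $u \vee w$ lie in $\Lambda_n$), so Theorem \ref{thm: discrete four function} yields
\[
    \sum_{z \in \Lambda_n} f_1(z) \sum_{z \in \Lambda_n} f_2(z) \leq \sum_{z \in \Lambda_n} f_3(z) \sum_{z \in \Lambda_n} f_4(z).
\]
Sending $n \to \infty$ by monotone convergence (all $f_i$ are non-negative) gives the desired inequality for $\mathbb{Z}^d$.

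The only mild obstacle is the passage from finite lattices to the infinite lattice $\mathbb{Z}^d$; this is handled cleanly by monotone convergence since all four functions are non-negative, and no integrability beyond the assumed $L_1$ (i.e.\ summability) condition on $f$ and $g$ is required to make sense of the finite-$n$ inequality. Everything else is a transcription of the Euclidean argument, replacing Lebesgue integrals by counting sums.
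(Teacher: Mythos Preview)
Your proposal is correct and follows essentially the same approach as the paper: mirror the Euclidean argument of Lemma~\ref{lem: sufficient condition for log-supermodularity preservation}, invoke the discrete Ahlswede--Daykin theorem on finite sublattices, and pass to the full lattice by an easy approximation. The paper phrases the truncation slightly differently (first reducing to $f,g$ of finite support rather than restricting the $f_i$ to boxes $\Lambda_n$), but this is only a cosmetic variation of the same limiting step.
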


\begin{proof}
    If $f$ and $g$ have finite support, then the proof from Lemma \ref{lem: sufficient condition for log-supermodularity preservation} can be followed, invoking Theorem \ref{thm: discrete four function}.  An easy approximation result completes the proof. 
\end{proof}

\begin{thm} \label{thm: ZR}
    For a $\mathbb{Z}^d$ product density $g(z) = \prod_{i=1}^d g_i(z_i)$, if $g_i$ are log-concave, then $f*g$ is log-supermodular if $f$ is log-supermodular.
\end{thm}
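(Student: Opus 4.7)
The plan is to mirror the proof of Theorem \ref{thm: logconcave products preserve LSM}, replacing Lemma \ref{lem: sufficient condition for log-supermodularity preservation} by its discrete analogue Lemma \ref{lem: discrete sufficient condition for log-supermodularity preservation}. By that lemma it suffices to verify \eqref{eq: discrete local condition for stable logsupermodularity}, namely
\[
    g(x-u)\,g(y-w) \leq g(x \wedge y - u \wedge w)\,g(x \vee y - u \vee w)
\]
for all $x,y,u,w \in \mathbb{Z}^d$. Using the product decomposition $g(z) = \prod_{i=1}^d g_i(z_i)$ and the coordinatewise definitions of $\wedge$ and $\vee$, this inequality factors across coordinates: it reduces to proving, for each $i \in \{1,\dots,d\}$,
\[
    g_i(x_i - u_i)\,g_i(y_i - w_i) \leq g_i(x_i \wedge y_i - u_i \wedge w_i)\,g_i(x_i \vee y_i - u_i \vee w_i).
\]
Thus the whole problem reduces to verifying the one-dimensional statement for a single log-concave $g_i$ on $\mathbb{Z}$.

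Next I would translate this one-dimensional inequality into a statement about convex functions. By the definition of log-concavity on $\mathbb{Z}$ used in the excerpt, there exists a convex function $V : \mathbb{R} \to \mathbb{R} \cup \{\infty\}$ interpolating $-\log g_i$ on its support (with $V = \infty$ off the support, matching the convention $g_i = 0$). The required inequality then becomes
\[
    V(x - u) + V(y - w) \geq V(x \wedge y - u \wedge w) + V(x \vee y - u \vee w),
\]
for $x,y,u,w \in \mathbb{Z}$, which has precisely the form of \eqref{eq: convex function statement for LSM} and can be handled by the same case analysis used there. Without loss of generality one assumes $x \leq y$; if also $u \leq w$ both sides coincide, so the only case requiring work is $u > w$, where the inequality rearranges to
\[
    \frac{V(y - w) - V(x - w)}{y - x} \geq \frac{V(y - u) - V(x - u)}{y - x},
\]
which is an immediate consequence of the convexity of $V$ (the slope of a convex function over an interval of fixed length increases as the interval is translated to the right).

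Finally, one should check that the approximation step invoked at the end of Lemma \ref{lem: discrete sufficient condition for log-supermodularity preservation} causes no trouble here, since the inequality we want to deduce for $f * g$ is a pointwise inequality at fixed $x,y \in \mathbb{Z}^d$, and the convolution sums defining $f * g(x)$, $f * g(y)$, $f * g(x \wedge y)$, $f * g(x \vee y)$ can be approximated by truncations to finite boxes. The main obstacle, if any, is purely bookkeeping: ensuring that the convex interpolant $V$ may take the value $+\infty$ (where $g_i$ vanishes) without breaking the case analysis, but since any inequality of the form $\infty + \text{(anything)} \geq \text{(anything finite)}$ holds trivially, and the case $u \leq w$ always produces equality, no genuine difficulty arises.
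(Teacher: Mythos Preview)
Your proposal is correct and follows essentially the same route as the paper: reduce via Lemma \ref{lem: discrete sufficient condition for log-supermodularity preservation} to the coordinatewise one-dimensional inequality, then handle the only nontrivial case $x\le y$, $u>w$ by convexity of the interpolant $V$. The paper phrases this last step via the equivalent ``spreading'' characterization ($a+b=c+d$ and $|b-a|\ge|d-c|$ imply $V(a)+V(b)\ge V(c)+V(d)$), whereas you use the slope-monotonicity form already employed in the proof of Theorem \ref{thm: logconcave products preserve LSM}; these are the same argument.
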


\begin{proof}
       From Lemma \ref{lem: discrete sufficient condition for log-supermodularity preservation} it suffices to check \eqref{eq: discrete local condition for stable logsupermodularity}.  Since $g$ is a product density, given by $g(z) = \prod_{i=1}^d g_i(z_i)$ for $g_i$ log-concave, \eqref{eq: local condition for stable logsupermodularity} is exactly
    \[
        \prod_{i=1}^d g_i(x_i - u_i)g_i (y_i - w_i) \leq  \prod_{i = 1}^d g_i(x_i \wedge y_i  - u_i \wedge w_i)  g_i(x_i \vee y_i - u_i \vee w_i).
    \]
    Thus it suffices to prove \eqref{eq: local condition for stable logsupermodularity} when $g$ is a log-concave density on $\mathbb{Z}$.
    As in the proof of Theorem \ref{thm: logconcave products preserve LSM}, without loss of generality, we may assume $x \leq y$ and recall that the inequality is non-trivial only when $u > w$.  Thus the inequality follows if 
    \[
        g(x - u) g(y-w) \geq g(x - w) g( y - u)
    \]
    holds in this case.   But, since $(x-u) + (y-w) = (x-w) - (y-u)$, while $| (y-w) - (x-u) | = (y-x) + (u-w) \leq |y-x| + |u -w | = |y-x+u -w| = |(y-u) - (x -w)|$
\end{proof}

\section{Log-supermodularity and entropy power inequality} \label{sec: LSM EPI}
In this section we sketch the connection between Conjecture \ref{conj: ZR}, and a conditional entropy power inequality explored in \cite{MMR26}.
We direct the interested reader to \cite{MMR26} where one considers (without major modifications to what we present here) a collection of $n$, $\mathbb{R}^d$-valued variables.  For simplicity of exposition, we take random variables $(X,Y) \in \mathbb{R}^2$  with a joint density $p$ and define the conditional entropy of $X$ given $Y$, defined as
\[
    H(X|Y) \coloneqq H(X,Y) - H(Y).
\]
Typical regularity conditions are assumed in \cite{MMR26} and throughout this section to justify some exchange of limits and integration by parts (for the de Brujin formula).  Namely we assume $\mathbb{E}|X|^2 + \mathbb{E}|Y|^2 < \infty$ and the joint density of $(X,Y)$, $p$ is such that, $p \log p \in L_1(\mathbb{R}^2).$
  We recall the entropy power of a random variable $N(X) \coloneqq e^{2 H(X)}$ and the entropy power inequality; a fundamental result in information theory that dates back to Shannon's seminal work.
\begin{thm}[Shannon \cite{Sha48}] \label{thm: Shannon EPI}
For $X$ and $Y$ independent $\mathbb{R}$-valued random variables,
\[
    N(X+Y) \geq N(X) + N(Y).
\]
\end{thm}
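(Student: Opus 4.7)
The plan is to prove this classical Entropy Power Inequality via the Stam--Blachman approach, using Fisher information and the de Bruijn identity. Write $J(X) := \int (f_X')^2/f_X$ for the Fisher information of a random variable $X$ with smooth density $f_X$ on $\mathbb{R}$. The argument rests on two inputs. The first is \emph{Stam's Fisher information inequality},
\[
    \frac{1}{J(X+Y)} \geq \frac{1}{J(X)} + \frac{1}{J(Y)}
\]
for independent $X, Y$ with finite Fisher information, which I would derive from the projection identity $s_{X+Y}(X+Y) = \mathbb{E}[\alpha\, s_X(X) + (1-\alpha)\, s_Y(Y) \mid X+Y]$ for score functions (valid for any $\alpha \in \mathbb{R}$) by Jensen and optimization over $\alpha$. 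The second is \emph{de Bruijn's identity}, $\tfrac{d}{dt} H(X + \sqrt{t}\, Z) = \tfrac12 J(X + \sqrt{t}\, Z)$ for $Z \sim N(0,1)$ independent of $X$, obtained by differentiating under the integral and using that the density of $X + \sqrt{t}\, Z$ satisfies the heat equation.

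To combine them, introduce independent standard Gaussians $Z_1, Z_2$ independent of $(X,Y)$ and positive weights $c_1, c_2$, and set $X_t := X + \sqrt{c_1 t}\, Z_1$, $Y_t := Y + \sqrt{c_2 t}\, Z_2$. Then $X_t, Y_t$ are independent and $X_t + Y_t = (X+Y) + \sqrt{(c_1+c_2)t}\, Z_\star$ for a standard Gaussian $Z_\star$. Define the entropy-power defect
\[
    \Psi(t) := N(X_t) + N(Y_t) - N(X_t + Y_t).
\]
Via $N = e^{2H}$ and de Bruijn,
\[
    \Psi'(t) = c_1 N(X_t) J(X_t) + c_2 N(Y_t) J(Y_t) - (c_1+c_2) N(X_t+Y_t) J(X_t+Y_t).
\]
As $t \to \infty$ the Gaussian components dominate, giving $N(X_t) \sim 2\pi e c_1 t$, $N(Y_t) \sim 2\pi e c_2 t$, and $N(X_t+Y_t) \sim 2\pi e (c_1+c_2) t$, so $\Psi(t) \to 0$. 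Consequently, if $\Psi'(t) \geq 0$ for an appropriate choice of $(c_1, c_2)$, then $\Psi(0) \leq \Psi(\infty) = 0$, which is exactly the EPI $N(X) + N(Y) \leq N(X+Y)$.

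The main obstacle is verifying $\Psi'(t) \geq 0$. Stam's inequality alone gives $c_1 J(X_t) + c_2 J(Y_t) \geq (c_1+c_2) J(X_t+Y_t)$ for any positive $c_1, c_2$ (a short computation from the original Stam bound), but matching the $N$ weights requires Blachman's choice of $(c_1, c_2)$ proportional to $(N(X), N(Y))$, held fixed rather than $t$-dependent, together with a Cauchy--Schwarz-type combination using the scale-invariant pointwise bound $N \cdot J \geq 2\pi e$ (with equality for Gaussians). The technical regularity --- smoothness of densities, finiteness of Fisher information, integrability of $f \log f$, and the exchange of limits and integrals needed for de Bruijn --- is secured by first proving the inequality for the Gaussian-smoothed pair $(X + \sqrt{\varepsilon}\, W_1, Y + \sqrt{\varepsilon}\, W_2)$ for small $\varepsilon > 0$ and then sending $\varepsilon \to 0^+$, using continuity $H(X + \sqrt{\varepsilon}\, W_1) \to H(X)$ under Gaussian smoothing.
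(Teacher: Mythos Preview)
The paper does not prove this theorem; it is merely recalled as a classical result (with citation to Shannon) to motivate the conditional entropy power inequality that follows. There is thus no paper proof to compare against.

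On your sketch itself: the Stam--Blachman framework (de Bruijn's identity together with Fisher-information superadditivity) is a valid classical route to the EPI, and your two inputs are the right ones. The gap is in the monotonicity step. With $\Psi(t)=N(X_t)+N(Y_t)-N(X_t+Y_t)$ you need
\[
c_1 N(X_t)J(X_t)+c_2 N(Y_t)J(Y_t)\ \geq\ (c_1+c_2)\,N(X_t+Y_t)J(X_t+Y_t),
\]
and the tools you list do not give this. The weakened Stam bound $c_1 J(X_t)+c_2 J(Y_t)\geq (c_1+c_2)J(X_t+Y_t)$ carries no $N$-weights, and the isoperimetric bound $N\cdot J\geq 2\pi e$ points the wrong way: it lower-bounds the left-hand terms, whereas you would then need $N(X_t+Y_t)J(X_t+Y_t)\leq 2\pi e$, which is false in general. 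No fixed choice of $(c_1,c_2)$ repairs this.

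The standard fix, still within your framework, is to work at the entropy level via Lieb's equivalent form: show $H(\sqrt{\lambda}\,X+\sqrt{1-\lambda}\,Y)\geq \lambda H(X)+(1-\lambda)H(Y)$ for all $\lambda\in(0,1)$. Running the heat flow with \emph{equal} noise, $X_t=X+\sqrt{t}\,Z_1$, $Y_t=Y+\sqrt{t}\,Z_2$, the deficit
\[
\psi(t)=H\bigl(\sqrt{\lambda}\,X_t+\sqrt{1-\lambda}\,Y_t\bigr)-\lambda H(X_t)-(1-\lambda)H(Y_t)
\]
satisfies, by de Bruijn, $\psi'(t)=\tfrac12\bigl[J(\sqrt{\lambda}\,X_t+\sqrt{1-\lambda}\,Y_t)-\lambda J(X_t)-(1-\lambda)J(Y_t)\bigr]\leq 0$ \emph{directly} from Blachman--Stam (choose $\alpha=\lambda$ in $J(X'+Y')\leq \alpha^2 J(X')+(1-\alpha)^2 J(Y')$ with $X'=\sqrt{\lambda}\,X_t$, $Y'=\sqrt{1-\lambda}\,Y_t$). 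Since $\psi(t)\to 0$ as $t\to\infty$, one gets $\psi(0)\geq 0$; optimizing over $\lambda$ (equivalently, rescaling) yields $N(X+Y)\geq N(X)+N(Y)$. Your regularity remarks (smoothing by $\varepsilon$-Gaussians and passing to the limit) apply unchanged to this version.
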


With the intention of extending some form of the entropy power inequality to dependent random variables, though studying the behavior of the conditional entropy, it will be convenient to define in the presence of a joint density for $(X,Y)$, a conditional entropy power,
\[
    N(X|Y) \coloneqq e^{2 H(X|Y)} .
\]
In \cite{MMR26} a (classical) semi-group approach is used to derive an integral inequality based on Fisher information bounds that extended previous work of Johnson \cite{Joh04:1}. In more detail below, a more technical formulation of the following ``conditional entropy power inequality'', with a deficit term controlled by an integral of a functional of the Fisher information, was announced\footnote{In \cite{MMR26} the result is stated for $n$, $\mathbb{R}^d$-valued random variables $X_1, \dots,X_n$ and explicit formulation of the coefficients $\lambda_i$ is given.}.  To this end let $Z_1$ and $Z_2$ be independent (of each other and $X,Y$) standard Gaussian random variables, $s > 0$, $\lambda \in (0,1)$ and define
\begin{align}\label{eq: 2 d OU flow}
    X_s &= \frac{e^{-s}}{\sqrt{1-\lambda}} X+ \sqrt{1 - e^{-2s}}Z_1
        \\
    Y_s &= \frac{e^{-s}}{\sqrt{1-\lambda}} Y+ \sqrt{1 - e^{-2s}}Z_2 \nonumber,
\end{align}
and let $p_s$ denote the joint density of $(X_s,Y_s)$.
\begin{thm}[Madiman-Melbourne-Roberto \cite{MMR26}]
For $X$ and $Y$ be $\mathbb{R}$-valued random variables, there exists 
 $\lambda \in (0,1)$ such that 
\[
    e^{S} \ N(X + Y) \geq N(X|Y) + N(Y|X),
\]
where in the notation of \eqref{eq: 2 d OU flow},
\begin{align} \label{eq: S def}
S \coloneqq 4 \sqrt{\lambda(1-\lambda)} \int_0^\infty  \mathbb{E} (\partial_x \log p_s) (\partial_y \log p_s)(X_s,Y_s)  ds.
\end{align}
\end{thm}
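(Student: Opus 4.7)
The plan is to adapt the classical Stam--de Bruijn semigroup approach to the conditional setting via the coupled Ornstein--Uhlenbeck flow \eqref{eq: 2 d OU flow}. Introduce the functional
\[
\Phi(s) \coloneqq 2 H(X_s + Y_s) - \log\!\bigl( N(X_s|Y_s) + N(Y_s|X_s) \bigr).
\]
At $s = 0$ the scaling $1/\sqrt{1-\lambda}$ contributes the same additive correction $-\log(1-\lambda)$ to both $2H(X_0+Y_0)$ and $\log\bigl(N(X_0|Y_0)+N(Y_0|X_0)\bigr)$, so these cancel and $\Phi(0) = 2H(X+Y) - \log\bigl(N(X|Y) + N(Y|X)\bigr)$, which is exactly the quantity we wish to bound below by $-S$. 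As $s \to \infty$, $(X_s, Y_s)$ converges to two independent standard Gaussians $(Z_1, Z_2)$, for which $N(Z_1+Z_2) = 4\pi e = N(Z_1|Z_2) + N(Z_2|Z_1)$ by independence, giving $\Phi(\infty) = 0$. The theorem is therefore equivalent to the pointwise differential bound
\[
\Phi'(s) \leq 4\sqrt{\lambda(1-\lambda)}\,\mathbb{E}\!\bigl[(\partial_x \log p_s)(\partial_y \log p_s)(X_s, Y_s)\bigr], \qquad s > 0,
\]
which integrates to $-\Phi(0) = \Phi(\infty) - \Phi(0) \leq S$.

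To compute $\Phi'(s)$, I would apply de Bruijn's identity to the joint Fokker--Planck equation $\partial_s p_s = \partial_x(x p_s) + \partial_y(y p_s) + \partial_{xx} p_s + \partial_{yy} p_s$. Integration by parts yields $\tfrac{d}{ds} H(X_s, Y_s) = -2 + I_X + I_Y$ with $I_X = \mathbb{E}(\partial_x \log p_s)^2$ and $I_Y = \mathbb{E}(\partial_y \log p_s)^2$, and $\tfrac{d}{ds} H(X_s) = -1 + I(X_s)$ for the marginal (and likewise for $Y_s$). Since $W_s = X_s + Y_s$ itself solves a one-dimensional OU SDE with doubled diffusion, $\tfrac{d}{ds} H(W_s) = -1 + 2 I(W_s)$. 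Linearize the remaining logarithm through the log-sum-exp duality
\[
\log(a+b) = \sup_{t \in [0,1]} \bigl\{ t \log a + (1-t) \log b + h(t) \bigr\}, \quad h(t) = -t \log t - (1-t) \log(1-t),
\]
evaluated at the dual variable $t = \lambda$, so that $\Phi'(s)$ becomes a linear combination of $I(W_s)$, $I_X$, $I_Y$, $I(X_s)$, $I(Y_s)$ with weights involving $\lambda$ and $1-\lambda$.

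The decisive input is a Cauchy--Schwarz bound on the score of the sum. Since $q_s(w) = \int p_s(x, w-x)\,dx$ yields
\[
\partial_w \log q_s(w) = \mathbb{E}[\partial_x \log p_s \mid W_s = w] = \mathbb{E}[\partial_y \log p_s \mid W_s = w],
\]
one has $\partial_w \log q_s = \mathbb{E}[\alpha \partial_x \log p_s + \beta \partial_y \log p_s \mid W_s]$ for every $\alpha + \beta = 1$, and conditional Jensen gives
\[
I(W_s) \leq \alpha^2 I_X + \beta^2 I_Y + 2\alpha\beta\,\mathbb{E}\bigl[(\partial_x \log p_s)(\partial_y \log p_s)\bigr].
\]
This is the sole source of the cross term in the differential inequality. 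The hard part is the coefficient bookkeeping at this step: one must choose $(\alpha, \beta)$ and $\lambda$ simultaneously so that, after combining the de Bruijn identities with the linearized entropy formula, every Fisher information term ($I_X$, $I_Y$, $I(X_s)$, $I(Y_s)$) cancels and only the cross term survives, with overall coefficient exactly $4\sqrt{\lambda(1-\lambda)}$. This matching effectively balances the conditional Fisher informations $I(X_s|Y_s)$ and $I(Y_s|X_s)$ in the weights $\lambda, 1-\lambda$, determining $\lambda$ as an implicit function of the joint law of $(X,Y)$; the existence of such a $\lambda \in (0,1)$ is precisely what the theorem asserts. Integrating the resulting differential inequality over $s \in (0, \infty)$ then completes the proof.
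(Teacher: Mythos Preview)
This theorem is not proved in the paper at all: it is quoted from \cite{MMR26}, with the paper only remarking that ``a (classical) semi-group approach is used to derive an integral inequality based on Fisher information bounds that extended previous work of Johnson,'' and a footnote adding that in \cite{MMR26} ``explicit formulation of the coefficients $\lambda_i$ is given.'' So there is no in-paper proof to compare against; one can only check your sketch against that brief description and on its own merits.

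At the level of strategy your outline matches the description perfectly: an Ornstein--Uhlenbeck interpolation, de Bruijn identities for $H(W_s)$, $H(X_s,Y_s)$, $H(X_s)$, $H(Y_s)$, and a Blachman--Stam / Cauchy--Schwarz bound on the score of the sum producing the cross term $\mathbb{E}[(\partial_x\log p_s)(\partial_y\log p_s)]$. The boundary computations $\Phi(0)$ and $\Phi(\infty)$ are also correct.

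There is, however, a genuine gap at exactly the step you flag as ``coefficient bookkeeping.'' Carrying your identities through, one gets
\[
\Phi'(s)=4I(W_s)-2(J_X+J_Y)+2t^*_s\,I(Y_s)+2(1-t^*_s)\,I(X_s),
\qquad t^*_s=\frac{N(X_s|Y_s)}{N(X_s|Y_s)+N(Y_s|X_s)},
\]
and after inserting $I(W_s)\le \alpha^2 J_X+\beta^2 J_Y+2\alpha\beta J_{XY}$ (with $\alpha+\beta=1$) together with $I(X_s)\le J_X$, $I(Y_s)\le J_Y$, the $J_X,J_Y$ coefficients become $4\alpha^2-2t^*_s$ and $4\beta^2-2(1-t^*_s)$. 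Killing both forces $\alpha=\sqrt{t^*_s/2}$, $\beta=\sqrt{(1-t^*_s)/2}$, which satisfies the constraint $\alpha+\beta=1$ only when $t^*_s=\tfrac12$; for any other value the coefficients cannot all be made nonpositive. Thus with only the ingredients you list, the differential inequality $\Phi'(s)\le 4\sqrt{\lambda(1-\lambda)}\,J_{XY}$ does not follow for a fixed $\lambda$ and all $s$. Relatedly, your linearization $\log(a+b)\ge \lambda\log a+(1-\lambda)\log b+h(\lambda)$ bounds the \emph{value} of $\Phi$, not its derivative; moving to derivatives requires the envelope theorem, which forces $t^*_s$ (an $s$-dependent quantity), not the fixed flow parameter $\lambda$. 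Finally, your closing sentence that ``the existence of such a $\lambda\in(0,1)$ is precisely what the theorem asserts'' is circular: you cannot invoke the theorem's conclusion to justify the choice of $\lambda$ inside its own proof. The paper's footnote indicates that \cite{MMR26} gives an explicit formula for $\lambda$ in terms of the law of $(X,Y)$, and presumably also the refined Fisher-information inequality needed to make the coefficients close; those missing pieces are the substance of the argument, not mere bookkeeping.
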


Let us prove the following as a corollary.

\begin{cor}
    For $(X,Y)$ random variables with log-supermodular joint density,
    \[
        N(X+Y) \geq N(X|Y) + N(Y|X).
    \]
\end{cor}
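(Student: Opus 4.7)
The plan is to apply the theorem of Madiman--Melbourne--Roberto, which gives $e^{S} N(X+Y) \geq N(X|Y) + N(Y|X)$, and then show that when $p$ is log-supermodular the correction $S$ in \eqref{eq: S def} is non-positive, so that $e^S \leq 1$ delivers the claim. Since $4\sqrt{\lambda(1-\lambda)} > 0$, this reduces to showing
\[
    \mathbb{E}\bigl[(\partial_x \log p_s)(\partial_y \log p_s)(X_s, Y_s)\bigr] \leq 0 \qquad \text{for every } s \geq 0.
\]

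First, I would argue that the joint density $p_s$ of $(X_s,Y_s)$ is itself log-supermodular. From \eqref{eq: 2 d OU flow},
\[
(X_s,Y_s) = \frac{e^{-s}}{\sqrt{1-\lambda}}(X,Y) + \sqrt{1-e^{-2s}}(Z_1, Z_2),
\]
so $p_s$ is the convolution of a positive rescaling of $p$ (which remains log-supermodular, since scaling by a positive constant preserves the lattice inequality) with the density of $\sqrt{1-e^{-2s}}(Z_1,Z_2)$, a product of two equal-variance one-dimensional Gaussians, hence a log-concave product density on $\mathbb{R}^2$. Theorem \ref{thm: logconcave products preserve LSM} then guarantees that $p_s$ is log-supermodular for every $s > 0$.

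Next I would integrate by parts in $x$. Using $(\partial_x \log p_s)\, p_s = \partial_x p_s$ and the Gaussian-tail decay of $p_s$ for $s > 0$,
\begin{align*}
\mathbb{E}\bigl[(\partial_x \log p_s)(\partial_y \log p_s)\bigr]
&= \int (\partial_x p_s)(\partial_y \log p_s)\, dx\, dy \\
&= -\int p_s\, \partial_x \partial_y \log p_s\, dx\, dy = -\mathbb{E}\bigl[\partial_x \partial_y \log p_s\bigr].
\end{align*}
Because $p_s$ is log-supermodular and smooth (as a convolution with a non-degenerate Gaussian), the two-variable Topkis characterization recalled after Lemma \ref{lem: sufficient condition for log-supermodularity preservation} gives $\partial_x \partial_y \log p_s \geq 0$ pointwise, so the right-hand side is non-positive. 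Hence the integrand in \eqref{eq: S def} is non-positive for each $s > 0$, giving $S \leq 0$ and the conclusion.

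The main obstacle is the analytic justification of the integration by parts and of the pointwise differential characterization of log-supermodularity applied to $\log p_s$. Both should follow from the regularity hypotheses on $p$ noted before \eqref{eq: 2 d OU flow} together with the Gaussian smoothing built into the flow, which for $s > 0$ renders $p_s$ smooth with Gaussian tails on any sublevel set of $\log p_s$; the endpoint $s = 0$ contributes a measure-zero set to the integral defining $S$ and can be ignored.
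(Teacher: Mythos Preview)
Your proposal is correct and follows essentially the same argument as the paper: integrate by parts to rewrite the integrand as $-\mathbb{E}[\partial_{xy}\log p_s]$, invoke Theorem~\ref{thm: logconcave products preserve LSM} (together with the trivial scaling observation) to show $p_s$ is log-supermodular, and then apply Topkis' differential characterization to conclude $S\le 0$. The only difference is the order of presentation, and your added remarks on the analytic justification of the integration by parts are consistent with the regularity assumptions the paper imposes.
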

Note that when $(X,Y)$ are independent $N(X|Y) = N(X)$ and $N(Y|X) = N(Y)$ one recovers Theorem \ref{thm: Shannon EPI}.

\begin{proof}
    We consider the integrand in \eqref{eq: S def} and integrate by parts,
    \begin{align*}
        \mathbb{E} (\partial_x \log p_s) (\partial_y \log p_s)(X_s,Y_s) 
            &=
                \int_{\mathbb{R}^2} (\partial_x \log p_s)(\partial_y \log p_s)p_s
                    \\
            &=
                \int_{\mathbb{R}^2} \partial_x p_s (\partial_y \log p_s)
                    \\
            &=
                - \int_{\mathbb{R}^2} (\partial_{xy} \log p_s) p_s.
    \end{align*}
    Since $\log p_s$ is smooth, for any $s$ such that $p_s$ log-supermodular, we have by Topkis’ differentiable characterization of submodularity that $(\partial_{xy} \log p_s) \geq 0$ and hence the integrand is negative.  By a trivial scaling argument, if $p$, the density of $(X,Y)$ is log-supermodular, then for $s>0$ the density of $\frac{e^{-s}}{\sqrt{1-\lambda}}(X,Y)$ is log-supermodular.  Observing that $\sqrt{1 - e^{-2s}} (Z_1,Z_2)$ has a log-concave product density and applying Theorem \ref{thm: logconcave products preserve LSM} we have that $p_s$, the density of
    \[
        (X_s,Y_s) = \frac{e^{-s}}{\sqrt{1-\lambda}}(X,Y) + \sqrt{1 - e^{-2s}} (Z_1,Z_2),
    \]
    is log-supermodular. Hence
    \[
        \mathbb{E} (\partial_x \log p_s) (\partial_y \log p_s)(X_s,Y_s)  \leq 0
    \]
    for every $s >0$.  Integrating we have $S \leq 0$, and $e^S \leq 1,$ completing the proof.
\end{proof}
\section{Transport arguments to four function theorems} \label{sec: Transport}

In this section we give a transport proof of Theorem \ref{thm: four function continous}, following \cite{GRST21}.

Let $\nu_1,\nu_2$ be two probability measures on an interval $I \subseteq \mathbb{R}$, absolutely continuous with respect to the Lesbesgue measure, and with, densities $n_1,n_2 \colon I \to (0,\infty)$, assumed strictly positive for convenience.
Denote $F_1(x)=\int_{-\infty}^x d\nu_1$, $F_2(x)=\int_{-\infty}^x d\nu_2$, $x \in \mathbf{R}$, the associated cumulative distribution functions. 
Recall that $T=F_2^{-1} \circ F_1$ is the transport map that pushes forward $\nu_2$ onto $\nu_1=T\#\nu_2$, meaning that, for any measurable function $f$ on the line
$$
\int f(T) d\nu_2 = \int f d\nu_1 .
$$
The transport coupling is then 
$$
\pi(dx,dy) = \nu_1(dx) \delta_{T(x)}(dy)
$$
where $\delta$ denotes the Dirac mass. It is a coupling in the sense that, as the reader can easily verify, its first marginal is $\nu_1$ and its second marginal $\nu_2$.

In the definition of the four functions theorem appear the max and min functions that we denote
$m_-(x,y)=x \wedge y$ and $m_+(x,y)=x \vee y$, $x,y \in \mathbb{R}$. Finally we define 
$$
\nu_- = m_- \# \pi, \qquad \nu_+=m_+ \# \pi .
$$
 Our first aim is to compute the densities $n_-$ and $n_+$ of $\nu_-$ and $\nu_+$ respectively. Set $A=\{x \in I: x \leq T(x) \}$ and observe that, since $T$ is increasing, 
 for any $x \in A$, $T(x) \in A$ so that $T(A) \subset A$. Similarly $x \notin A$ implies $T(x) \notin A$ so that the complement $A^c = I -A$ satisfies $T(A^c) \subset A^c$. Since $(A,A^c)$ and $(T(A), T(A^c))$ are two partitions of $I$, necessarily $T(A)=A$ and $T(A^c)=A^c$.

Now consider a measurable function $f$. By definition of $\nu_-$ and $\pi$, it holds
\begin{align*}
\int f(u) \nu_-(du) 
& = 
\iint f(m_-(x,y))\pi(dx,dy) \\ 
& =
\int f(x \wedge T(x)) n_1(x)dx \\
& =
\int_A f(x) n_1(x) dx + \int_{A^c} f(T(x)) n_1(x)dx .
\end {align*}
In the second integral, we change variable $y=T(x)$ to obtain
\begin{align*}
\int_{A^c} f(T(x)) n_1(x)dx 
& = 
\int_{T(A^c)} f(y) n_1(T^{-1}(y)) \frac{dy}{T'(T^{-1}(y))} \\
& =
\int_{A^c} f(y) n_1(T^{-1}(y)) \frac{dy}{T'(T^{-1}(y))} \\
& =
\int_{A^c} f(y) n_2(y)dy
\end{align*}
where for the last equality we computed explicitly
$T'(T^{-1}(y))=n_1(T^{-1}(y))/n_2(y)$.

It follows that
$$
\int f(u) \nu_-(du) = \int f \left( n_1 \mathds{1}_A + n_2 \mathds{1}_{A^c} \right) 
$$
and therefore that 
$$
n_-= n_1 \mathds{1}_A + n_2 \mathds{1}_{A^c}. 
$$
Similarly it holds (details are left to the reader)
$$
n_+= n_1 \mathds{1}_{A^c} + n_2 \mathds{1}_{A} .
$$

With these computations in hand, we will prove the following displacement convexity of entropy. Given a probability measure $\mu$ with density $n \colon \mathbb{R} \to (0,\infty)$, we denote $H(\nu|\lambda)= \int n \log n$ the relative entropy of $\mu$ with respect to the Lebesgue measure $\lambda$ on the line
(if $X$ is a random variable with probability density $n$, then
$H(\mu|\lambda)=-H(X)$, where $H$ is the Shannon entropy defined in Definition \ref{def:shannon}).

\begin{prop} \label{prop:displacement-convexity}
With the above notations, it holds
$$
H(\nu_-|\lambda) + H(\nu_+|\lambda) \leq H(\nu_1|\lambda) + H(\nu_2|\lambda) .
$$
\end{prop}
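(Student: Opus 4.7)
The plan is to exploit directly the explicit formulas
$$
n_- = n_1 \mathds{1}_A + n_2 \mathds{1}_{A^c}, \qquad n_+ = n_1 \mathds{1}_{A^c} + n_2 \mathds{1}_A
$$
already derived in the preceding discussion, together with the fact (established there via monotonicity of $T$) that $A$ and $A^c$ form a $T$-invariant measurable partition of $I$. Since both $n_1$ and $n_2$ are strictly positive, on $A$ one has $n_- = n_1$ and hence $\log n_- = \log n_1$, while on $A^c$ one has $n_- = n_2$ and hence $\log n_- = \log n_2$; the analogous statement holds for $n_+$ with the roles swapped.

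First I would verify that the individual integrals below are well defined (which follows from $n_1 \log n_1, n_2 \log n_2 \in L_1(I)$ and measurability of $A$). Then splitting the entropy integrals at $A$ and $A^c$ gives
$$
H(\nu_-|\lambda) = \int_A n_1 \log n_1\, dx + \int_{A^c} n_2 \log n_2\, dx,
$$
$$
H(\nu_+|\lambda) = \int_{A^c} n_1 \log n_1\, dx + \int_A n_2 \log n_2\, dx.
$$
Adding these two identities and regrouping the four pieces by which density appears, one obtains
$$
H(\nu_-|\lambda) + H(\nu_+|\lambda) = \int_I n_1 \log n_1\, dx + \int_I n_2 \log n_2\, dx = H(\nu_1|\lambda) + H(\nu_2|\lambda),
$$
which is precisely the claimed inequality, sharpened in fact to an equality.

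There is no real obstacle here: the one-dimensional monotone transport $T=F_2^{-1} \circ F_1$ and the swap structure of $\min/\max$ conspire so that $\nu_\pm$ agree with $\nu_1$ and $\nu_2$ on complementary pieces, making the relative entropy additive across the partition. The significance of the statement is therefore not the accompanying inequality but the clean bookkeeping it provides; this identity is what will be combined in the sequel with a log-supermodularity hypothesis (which turns the inequality $f_1 f_2 \leq f_3 f_4$ on $A \cup A^c$ into a pointwise statement along the monotone coupling) in order to derive Theorem \ref{thm: four function continous} by exponentiating and integrating.
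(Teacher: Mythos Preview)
Your argument is correct, and in fact sharper than what the paper proves: by splitting the integrals over $A$ and $A^c$ and using the explicit piecewise formulas $n_- = n_1\mathds{1}_A + n_2\mathds{1}_{A^c}$, $n_+ = n_1\mathds{1}_{A^c} + n_2\mathds{1}_A$, you obtain the identity
\[
H(\nu_-|\lambda)+H(\nu_+|\lambda)=H(\nu_1|\lambda)+H(\nu_2|\lambda)
\]
directly, without any inequality step.

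The paper takes a different route. It rewrites both sides as integrals against the coupling $\pi$, reduces the claim to
\[
\iint \log\!\left(\frac{n_-(m_-(x,y))\,n_+(m_+(x,y))}{n_1(x)\,n_2(y)}\right)\pi(dx,dy)\le 0,
\]
applies Jensen to pass to the quantity $H=\iint \frac{n_-(m_-)\,n_+(m_+)}{n_1 n_2}\,d\pi$, and then computes $H=1$ explicitly. Since the integrand inside the logarithm is in fact identically $1$ along the support of $\pi$ (as your partition argument makes transparent), the Jensen step is an equality here and the two proofs ultimately agree; your computation just exposes this at the outset.

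What the paper's longer argument buys is a template: the same Jensen-plus-coupling scheme is reused verbatim in Section~\ref{sec: ADI PLI} to prove Theorem~\ref{thm:cordero-maurey-general}, where the push-forwards $\nu_3,\nu_4$ under the generalized means $M_\alpha^s, M_\beta^t$ no longer have densities that simply swap $n_1$ and $n_2$ on complementary sets, the analogous $H$ is only $\le 1$, and the inequality is strict in general. Your elementary proof is cleaner for the proposition at hand but does not extend to that setting.
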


We postpone the proof of the above proposition to prove Theorem \ref{thm: four function continous}.

\begin{proof}[Proof of Theorem \ref{thm: four function continous}]
   Let $f_1, f_2, f_3, f_4$ four functions satisfying the hypothesis of the theorem. 
   Then, by definition of the coupling $\pi$, by hypothesis and by definition of $\nu_-,\nu_+$,  it holds
   \begin{align*}
   \int \log f_1 d\nu_1 + \int \log f_2 d\nu_2 
   & =
   \iint \log f_1 (x) + \log f_2 (y) \pi(dx,dy) \\
   & \leq 
      \iint \log f_3 (m_-(x,y)) + \log f_4 (m_+(x,y)) \pi(dx,dy) \\
      & =
      \int \log f_3 d\nu_- + \int \log f_4 d\nu_+ .
   \end{align*}
Subtracting the inequality of Proposition \ref{prop:displacement-convexity}, we end up with
\begin{align*}
  \left[\int \log f_1 d\nu_1 - H(\nu_1 |\lambda) \right] 
   + 
  \left[ \int \log f_2  d\nu_2 - H(\nu_2 |\lambda) \right] 
  & \leq 
   \left[\int \log f_3 d\nu_- - H(\nu_- |\lambda) \right] \\
  & \quad + 
  \left[ \int \log f_4  d\nu_+ - H(\nu_+ |\lambda) \right] .
\end{align*}
The expected result of Theorem \ref{thm: four function continous} follows by using (4 times, first on the right hand side, and then on the left hand side) the dual expression of the log-Laplace transform
$$
\log \int e^f d\lambda = \sup \left\{ \int fd\nu - H(\nu |\lambda) : \nu \ll \lambda, \nu(\mathbb{R}) =1 \right\}
$$
where the supremum is running over all probability measures on the line.  We note for the details oriented reader, that the supremum can be restricted to $\nu$ with density $n$ such that $n$ is strictly positive on $\{f < \infty\}$ and $n$ identically zero on $\{ f = \infty\}$, which eases potential approximation issues.


\end{proof}

\begin{proof}[Proof of Proposition \ref{prop:displacement-convexity}]
By an easy tensorization argument it suffices to consider $d =1$, and by approximation, one can assume that the $f_i$ are strictly positive on a bounded interval $I$, and identically $0$ on $I^c$.  Taking $\nu_1$ and $\nu_2$ to be probability measures with densities $n_1$ and $n_2$ strictly positive on $I$ and identically $0$ on $I^c$
By definition of $\nu_-$, $\nu_+$, $m_-$, $m_+$ and $\pi$, it holds
\begin{align*}
H(\nu_-|\lambda) + H(\nu_+|\lambda) 
& =
\int \log n_- d\nu_- + \int \log n_+ d\nu_+ \\
& =
\iint \log n_-(m_-(x,y)) \pi(dx,dy)   
+
\iint \log n_+(m_+(x,y)) \pi(dx,dy) \\
& =
\iint \log [n_-(m_-(x,y)) n_+(m_+(x,y)) ] \pi(dx,dy) .
\end{align*}
Similarly, 
\begin{align*}
H(\nu_1|\lambda) + H(\nu_2|\lambda) 
& =
\int \log n_1 d\nu_1 + \int \log n_2 d\nu_2 \\
& =
\iint \log [n_1(x)n_2(y)] \pi(dx,dy)   .
\end{align*}
Therefore, we need to prove that
$$
\iint \log \left( \frac{n_-(m_-(x,y)) n_+(m_+(x,y))}{n_1(x)n_2(y) } \right) \pi(dx,dy) \leq 0
$$
which, in turn, would be a consequence, by Jensen's inequality, of 
$$
H 
\coloneqq
\iint \frac{n_-(m_-(x,y)) n_+(m_+(x,y)}{n_1(x)n_2(y) } \pi(dx,dy)
\leq 1 .
$$ 
By definition of $\pi$, $m_-$ and $m_+$, it holds
$$
H = \int_\mathbb{R} \frac{n_-(x \wedge T(x)) n_+(x \vee T(x))}{n_2(T(x))} dx .
$$
By the explicit expression of $n_-$, $n_+$, it follows that
\begin{align*}
H 
& = 
\int_A \frac{n_-(x) n_+(T(x))}{n_2(T(x))} dx
+
\int_{A^c} \frac{n_-(T(x)) n_+(x)}{n_2(T(x))}  dx \\
& =
\int_A \frac{n_1(x) n_2(T(x))}{n_2(T(x))} dx
+
\int_{A^c} \frac{n_2(T(x)) n_1(x)}{n_2(T(x))}  dx \\
& =
\int d\nu_1(x) \\
& = 1 .
\end{align*}
This ends the proof of the proposition.
\end{proof}

\section{Connections with Prekopa-Leindler in the continuous setting} \label{sec: ADI PLI}

Here we recover and extend, using a transport proof, a result of Cordero-Erausquin and Maurey \cite{CM17:1}.


\begin{thm}[Cordero-Erausquin \& Maurey] \label{thm:cordero-maurey}
    For $f_i: \mathbb{R}^d \to [0,\infty)$ and $\lambda \in (0,1)$ such that
    \[
        f_1(x) f_2(y) \leq f_3(\lambda x + (1-\lambda) y) f_4((1-\lambda) x + \lambda y),
    \]
    then
    \[
        \int f_1 \int f_2  \leq \int f_3 \int f_4.
    \]
\end{thm}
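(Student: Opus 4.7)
The approach is a transport proof paralleling Section \ref{sec: Transport}, with the lattice operations $m_-(x,y) = x \wedge y$ and $m_+(x,y) = x \vee y$ replaced by the $\lambda$-convex combinations $m_\lambda^-(x,y) = \lambda x + (1-\lambda) y$ and $m_\lambda^+(x,y) = (1-\lambda) x + \lambda y$. First I would reduce to $d = 1$ by induction on dimension: writing $z = (z', z_d) \in \mathbb{R}^{d-1} \times \mathbb{R}$ and fixing $x_d, y_d$, the pointwise hypothesis applied on slices together with the $(d-1)$-dimensional inductive hypothesis yields
\[
    F_1(x_d) F_2(y_d) \leq F_3(\lambda x_d + (1-\lambda) y_d) F_4((1-\lambda) x_d + \lambda y_d)
\]
for the marginals $F_i(z_d) \coloneqq \int_{\mathbb{R}^{d-1}} f_i(z', z_d) dz'$. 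The one-dimensional case applied to $F_1, F_2, F_3, F_4$ together with Fubini then closes the induction.

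For the 1D case I would proceed as in Section \ref{sec: Transport}. After the usual reduction to $f_i$ strictly positive on a common bounded interval $I$ and zero on $I^c$, I introduce auxiliary probability measures $\nu_1, \nu_2$ with smooth strictly positive densities $n_1, n_2$ on $I$ to be optimized at the end, take the monotone transport $T = F_2^{-1} \circ F_1$ (so that $T \# \nu_1 = \nu_2$), form the coupling $\pi(dx, dy) = \nu_1(dx) \delta_{T(x)}(dy)$, and define $\nu_\pm \coloneqq m_\lambda^\pm \# \pi$. Because $T$ is smooth and strictly increasing, the maps $\phi_\pm(x) \coloneqq m_\lambda^\pm(x, T(x))$ are $C^1$ diffeomorphisms of $I$ with derivatives $\phi_-'(x) = \lambda + (1-\lambda) T'(x)$ and $\phi_+'(x) = (1-\lambda) + \lambda T'(x)$, and a change of variables gives the density formula $n_\pm(\phi_\pm(x)) = n_1(x) / \phi_\pm'(x)$.

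The central new step is the analogue of Proposition \ref{prop:displacement-convexity}, namely
\[
    H(\nu_-|\lambda) + H(\nu_+|\lambda) \leq H(\nu_1|\lambda) + H(\nu_2|\lambda).
\]
A direct change of variables, using $T'(x) = n_1(x)/n_2(T(x))$, rewrites this inequality as
\[
    \int n_1(x) \log \frac{(\lambda + (1-\lambda) T'(x))((1-\lambda) + \lambda T'(x))}{T'(x)} \, dx \geq 0,
\]
which in turn follows from the pointwise elementary bound $(\lambda + (1-\lambda) t)((1-\lambda) + \lambda t) \geq t$ for $t > 0$, equivalent to $\lambda(1-\lambda)(1-t)^2 \geq 0$. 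Once the displacement convexity is in hand, the remainder of the argument is identical to that of Theorem \ref{thm: four function continous}: integrate $\log f_1(x) + \log f_2(y)$ against $\pi$, dominate by $\log f_3(m_\lambda^-(x,y)) + \log f_4(m_\lambda^+(x,y))$ using the hypothesis, subtract the displacement convexity inequality, and invoke the log-Laplace duality formula four times (twice on each side) to optimize over $\nu_1, \nu_2$.

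The main obstacle is the displacement convexity step. In the Ahlswede--Daykin setting the maps $m_\pm$ merely reshuffled the coordinates of the coupling and the analogous identity held with equality; here $m_\lambda^\pm$ genuinely smear mass between $\nu_1$ and $\nu_2$, so the inequality becomes strict and hinges on the elementary convexity bound above. Everything else---the dimensional induction, the compactification and smoothing of the $f_i$, and the log-Laplace closing argument---is a direct adaptation of material already in Section \ref{sec: Transport}. The $\lambda$-dependence of the key bound also suggests that asymmetric weights on the two arguments remain tractable by the same method, and this should yield the non-linear interpolation with the Prekopa--Leindler inequality promised in the introduction.
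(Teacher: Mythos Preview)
Your proposal is correct and follows essentially the same transport strategy the paper uses: the paper deduces Theorem~\ref{thm:cordero-maurey} as the special case $\alpha=\beta=1$, $r=m=\tfrac12$, $s=\lambda$, $t=1-\lambda$ of Theorem~\ref{thm:cordero-maurey-general}, whose proof likewise reduces to $d=1$, couples via the monotone transport $T$, establishes the displacement convexity of entropy through a pointwise Jacobian bound, and closes with the log-Laplace duality. Your key inequality $(\lambda+(1-\lambda)t)((1-\lambda)+\lambda t)\geq t$ is exactly what the paper obtains by multiplying its two AM--GM estimates $H_3'\geq (T')^{1-s}$ and $H_4'\geq (T')^{1-t}$ in this special case, and your final remark about asymmetric weights anticipates precisely the generalization carried out in Theorem~\ref{thm:cordero-maurey-general}.
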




One should mention that the above appears as Proposition 1.2 in \cite{CM17:1} as an interesting but very special case of a more general Theorem 1.1 of the same paper where much more general {\it linear} maps\footnote{For example, the linear map being considered in the statement of \ref{thm:cordero-maurey} has matrix representation $A = \begin{pmatrix}
\lambda & 1-\lambda \\
1-\lambda & \lambda
\end{pmatrix}$, as $A \begin{pmatrix} x \\ y \end{pmatrix} = \begin{pmatrix}
    \lambda x + (1-\lambda) y \\
    (1-\lambda) x + \lambda y 
\end{pmatrix}$ with outputs corresponding to the inputs of $f_3$ and $f_4$.} and arbitrary combinations of functions are considered.  The generalization that we pursue here will be on non-linear means of $x$ and $y$ with hope of connecting the Ahlswede-Daykin four function theorem with that of Cordero-Erausquin and Maurey. 
To that aim we need to introduce some notations.
Define the generalized mean, for $x,y \geq 0$, $\lambda \in [0,1]$ and $\alpha \in \mathbb{R}\cup\{\pm \infty\}$,
$$
M_\alpha^\lambda(x,y) := \left( \lambda x^\alpha + (1-\lambda)y^\alpha \right)^\frac{1}{\alpha} 
$$
where the special cases $\alpha= \pm\infty, 0$ are to be understood in the limit, namely
$M_{-\infty}^\lambda(x,y)=\lim_{\alpha \to - \infty}M_\alpha^\lambda(x,y) = \min(x,y)$, similarly 
$M_{\infty}^\lambda(x,y)=\lim_{\alpha \to \infty}M_\alpha^\lambda(x,y) = \max(x,y)$ and 
$M_0^\lambda(x,y)=x^\lambda y^{1-\lambda}$ is the geometric mean. The case $\alpha=-1$ is the harmonic mean, while $\alpha=1$ corresponds to the arithmetic mean. 
    For vectors $x,y \in (0,\infty)^d$ we define a vector $M_\alpha^{\lambda}(x,y)$ by
    \[
        {M}_\alpha^{\lambda}(x,y)_i = {M}_\alpha^{\lambda}(x_i, y_i)
    \]
With these definitions in place we can state the two theorems together.

\begin{thm}[Ahlswede \& Daykin, Cordero-Erausquin \& Maurey] \label{thm: ADCEM}
    For  $t \in (0,1)$, $(\alpha,\beta) \in \{(1,1),(-\infty,\infty)\}$, and $f_i: (0,\infty)^d \to [0,\infty)$ such that
    \[
        f_1(x)f_2(y) \leq f_3(M^t_\alpha(x,y)) f_4(M_\beta^{1-t}(x,y))
    \]
    then
    \[
        \int f_1 \int f_2 \leq \int f_3 \int f_4.
    \]
\end{thm}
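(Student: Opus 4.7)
My plan is to adapt the transport argument of Section \ref{sec: Transport} to cover both pairs $(\alpha,\beta)\in\{(-\infty,\infty),(1,1)\}$ simultaneously, by distilling each case into a single pointwise Jacobian inequality along the monotone transport. Since the means $M_\alpha^t,M_\beta^{1-t}$ act coordinatewise and the log-Laplace duality step at the end of the proof of Theorem \ref{thm: four function continous} tensorizes over the $d$ factors, the $d$-dimensional statement reduces to $d=1$, which is where I would work.

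I would reproduce the setup of Section \ref{sec: Transport} on a bounded interval $I\subset(0,\infty)$: probability measures $\nu_1,\nu_2$ with strictly positive densities $n_1,n_2$, monotone rearrangement $T$ with $T_\#\nu_1=\nu_2$ (so $n_1=n_2(T)\,T'$), and coupling $\pi(dx,dy)=n_1(x)\,dx\,\delta_{T(x)}(dy)$. Setting $\Phi_-(x,y):=M_\alpha^t(x,y)$ and $\Phi_+(x,y):=M_\beta^{1-t}(x,y)$, I define $\nu_\pm:=(\Phi_\pm)_\#\pi$ with densities $n_\pm$, and write $U(x):=\Phi_-(x,T(x))$, $V(x):=\Phi_+(x,T(x))$, whose (piecewise) Jacobians are $J_-:=U'$ and $J_+:=V'$ (with the usual partition into $A=\{x\le T(x)\}$ and $A^c$ when $\alpha=-\infty$). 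A direct push-forward computation gives $n_-(U(x))=n_1(x)/J_-(x)$ and $n_+(V(x))=n_1(x)/J_+(x)$, so the analog of Proposition \ref{prop:displacement-convexity} reduces via Jensen's inequality to
$$
H:=\iint \frac{n_-(\Phi_-(x,y))\,n_+(\Phi_+(x,y))}{n_1(x)n_2(y)}\,d\pi(x,y)\le 1.
$$
Substituting the explicit densities and using $n_1=n_2(T)\,T'$ collapses this to $H=\int T'/(J_-J_+)\,d\nu_1$, so the entire displacement convexity bound rests on the pointwise Jacobian inequality $J_-(x)J_+(x)\ge T'(x)$.

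I would then verify this key inequality in both cases. When $(\alpha,\beta)=(-\infty,\infty)$, $\Phi_-=\wedge$ and $\Phi_+=\vee$: on $A$ one has $(J_-,J_+)=(1,T')$ and on $A^c$ one has $(J_-,J_+)=(T',1)$, so in fact $J_-J_+=T'$ everywhere, recovering Proposition \ref{prop:displacement-convexity} verbatim. When $(\alpha,\beta)=(1,1)$, $J_-=t+(1-t)T'$ and $J_+=(1-t)+tT'$, and applying the weighted arithmetic--geometric mean inequality with weights $(t,1-t)$ and $(1-t,t)$ to the pair $(1,T')$ gives $J_-\ge (T')^{1-t}$ and $J_+\ge (T')^t$, hence $J_-J_+\ge T'$.

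Having established the unified displacement convexity, I would conclude exactly as in the proof of Theorem \ref{thm: four function continous}: integrate the hypothesis $\log f_1(x)+\log f_2(y)\le \log f_3(\Phi_-(x,y))+\log f_4(\Phi_+(x,y))$ against $\pi$, subtract the displacement convexity inequality, and invoke the log-Laplace duality
$$
\log \int e^f\,d\lambda=\sup\left\{\int f\,d\nu-H(\nu|\lambda):\nu\ll\lambda,\ \nu(\mathbb{R})=1\right\}
$$
four times to extract $\int f_1\int f_2\le \int f_3\int f_4$. The main obstacle is identifying and proving the correct pointwise Jacobian inequality $J_-J_+\ge T'$; once this is in place, the rest is a transcription of Section \ref{sec: Transport}. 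It is worth noting that any pair $(\Phi_-,\Phi_+)$ whose Jacobians along the monotone transport satisfy $J_-J_+\ge T'$ would fall into this framework, which is precisely what allows the two rather different-looking theorems of Ahlswede--Daykin and Cordero-Erausquin--Maurey to admit a single unified proof.
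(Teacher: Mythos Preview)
Your proposal is correct and follows essentially the same transport approach as the paper: the $(\alpha,\beta)=(-\infty,\infty)$ case is exactly the argument of Section~\ref{sec: Transport} (where indeed $J_-J_+=T'$ identically), and the $(\alpha,\beta)=(1,1)$ case is the $\alpha=\beta=1$ instance of the proof of Theorem~\ref{thm:cordero-maurey-general}, where the same AM--GM bounds $H_3'\ge (T')^{1-s}$, $H_4'\ge (T')^{1-t}$ appear. Your presentation is slightly cleaner in that you isolate the single pointwise inequality $J_-J_+\ge T'$ as the common hinge for both cases, whereas the paper treats them in separate sections, but the underlying mechanism---monotone transport, Jensen reduction to a Jacobian bound, and log-Laplace duality---is identical.
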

Note that the assumption that the $f_i$ have support on $(0,\infty)^d$ comes at no loss of generality.  By approximation, to prove Ahlswede-Daykin or Cordero-Erausquin-Maurey it suffices to prove the result for $f_i$ with compact support.  By the translation invariance of the statement and the conclusion, we can assume that $f_i$ have compact support within $(0,\infty)^d$.

In the sequel we consider the following question.  Are there other pairs of $(\alpha, \beta)$ such that Theorem \ref{thm: ADCEM} continues to hold as stated.


\begin{thm}\label{thm:cordero-maurey-general}
Let $r, s,t \in (0,1)$, $m=sr+(1-r)t$ and $\alpha, \beta \in [0,1]$ . For $f_i: (0,\infty)^d \to (0,\infty)$ such that
    \[
        f_1(x)^m f_2(y)^{1-m} \leq f_3(M_\alpha^s(x,y))^rf_4(M_\beta^t(x,y))^{1-r}, 
        \qquad x,y >0
    \]
    then
    \[
        \left(\int_0^\infty f_1 \right)^m \left(\int_0^\infty f_2 \right)^{1-m}  \leq 
        \left(\int_0^\infty f_3 \right)^r \left(\int_0^\infty f_4 \right)^{1-r}.
    \]    
\end{thm}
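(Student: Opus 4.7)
The plan is to adapt the transport proof of Theorem \ref{thm: four function continous} in Section \ref{sec: Transport}, replacing the roles of the $\min$ and $\max$ maps by the generalized means $M_\alpha^s$ and $M_\beta^t$ and the uniform weights by $(r, 1-r)$. A routine tensorization -- since $M_\alpha^s$ acts componentwise and the hypothesis factorises across coordinates with $m$ independent of the coordinate -- reduces the problem to $d = 1$, so I focus on that case below.

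Put $F_i := \int_0^\infty f_i$, let $\nu_i$ be the probability measure with density $n_i = f_i/F_i$, let $T$ be the monotone transport from $\nu_1$ to $\nu_2$, and set $\pi := (\mathrm{id}, T)_\# \nu_1$, $\mu_\alpha^s := (M_\alpha^s)_\# \pi$, $\mu_\beta^t := (M_\beta^t)_\# \pi$. Taking $\log$ of the hypothesis, integrating against $\pi$, and then applying the variational formula for $\log \int e^f\, d\lambda$ exactly as in the proof of Theorem \ref{thm: four function continous} (with equality at the marginals $\nu_i$ of $\pi$ and the variational bound at $\mu_\alpha^s, \mu_\beta^t$) reduces the desired inequality to the displacement-type bound
$$
r H(\mu_\alpha^s \,|\, \lambda) + (1 - r) H(\mu_\beta^t \,|\, \lambda) \leq m H(\nu_1 \,|\, \lambda) + (1 - m) H(\nu_2 \,|\, \lambda), \qquad (\star)
$$
which here plays the role of Proposition \ref{prop:displacement-convexity}.

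The heart of the argument is then a one-parameter lemma: for every $\alpha \in [0, 1]$ and $s \in (0, 1)$,
$$
H\bigl( (M_\alpha^s(\cdot, T))_\# \nu_1 \,\big|\, \lambda\bigr) \;\leq\; s H(\nu_1 \,|\, \lambda) + (1 - s) H(\nu_2 \,|\, \lambda). \qquad (\star\star)
$$
Applying $(\star\star)$ to the pairs $(\alpha, s)$ and $(\beta, t)$ and taking the convex combination with weights $(r, 1-r)$ yields $(\star)$, since by construction $sr + (1-r)t = m$ and $(1-s)r + (1-t)(1-r) = 1 - m$. To prove $(\star\star)$ itself I would use the power substitution $\tilde X := X^\alpha$ (with $\tilde X := \log X$ handling the limit $\alpha = 0$): under this change of variable $M_\alpha^s(X, T(X))^\alpha = s \tilde X + (1 - s) \tilde T(\tilde X)$ becomes a linear displacement along the monotone transport $\tilde T$ between the pushforwards $\tilde \nu_1, \tilde \nu_2$, so the one-dimensional McCann displacement convexity applies directly in $\tilde X$-space. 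Translating back via the change-of-variables identity
$$
H(\tilde \nu \,|\, \lambda) = H(\nu \,|\, \lambda) + (1 - \alpha)\, \mathbb{E}_\nu[\log X] - \log \alpha,
$$
one is left with a net correction proportional to $(1 - \alpha)\bigl(s \mathbb{E}_{\nu_1}[\log X] + (1 - s)\mathbb{E}_{\nu_2}[\log X] - \mathbb{E}_{\mu_\alpha^s}[\log X]\bigr)$, which is nonpositive because $1 - \alpha \geq 0$ and concavity of $\log$ forces $\log M_\alpha^s(x, y) \geq s \log x + (1 - s) \log y$ (the AM--GM bound $M_\alpha^s \geq M_0^s$).

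I expect the main obstacle to be the careful bookkeeping of those correction terms: the hypothesis $\alpha, \beta \in [0, 1]$ is used precisely there, both through the sign of $1 - \alpha$ and through the AM--GM inequality $M_\alpha^s \geq M_0^s$. Everything else -- tensorization, the duality step, and the one-dimensional McCann inequality -- follows the template of Section \ref{sec: Transport}, up to the usual compact-support and strict-positivity approximations already invoked in the proof of Proposition \ref{prop:displacement-convexity}.
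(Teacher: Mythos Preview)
Your proposal is correct and follows the same overall transport-plus-duality template as the paper: reduce to $d=1$, couple $\nu_1,\nu_2$ via the monotone map $T$, establish the displacement inequality $(\star)$, then conclude by the variational formula for $\log\int e^f$. The paper also implicitly proves your one-parameter lemma $(\star\star)$, since it bounds each Jacobian separately before combining.

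Where you genuinely diverge is in how $(\star\star)$ is obtained. The paper computes directly: writing $H_3(x)=M_\alpha^s(x,T(x))$, it differentiates and applies the AM--GM inequality twice to the derivative to get the pointwise Jacobian bound $H_3'(x)\ge T'(x)^{1-s}$ (here $\alpha\in(0,1]$ enters through the exponent $\tfrac1\alpha-1\ge 0$), and then the change-of-variables $n_3(H_3)=n_1/H_3'$ and $T'=n_1/n_2(T)$ immediately give the entropy inequality. You instead linearise by the power substitution $\tilde X=X^\alpha$, reducing $M_\alpha^s$ to an affine interpolation so that McCann's displacement convexity applies verbatim in the $\tilde X$ variable; the residual Jacobian correction is then controlled by the power-mean inequality $M_\alpha^s\ge M_0^s$ together with $1-\alpha\ge 0$. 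Your route is a bit more conceptual---it makes transparent that the result is a reparametrised McCann inequality and isolates exactly where each hypothesis on $\alpha$ is spent---whereas the paper's direct derivative bound is shorter and avoids invoking displacement convexity as a black box. Both arguments use the same ingredients (AM--GM/power-mean and monotone transport) at essentially the same place; they are two presentations of one idea.
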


Observe that taking $r = \frac 1 2$ and $s = 1-t$ forces $m = \frac 1 2$ and we see that Theorem \ref{thm:cordero-maurey-general} extends Theorem \ref{thm: ADCEM} to include $(\alpha, \beta) \in [0,1] \times[0,1]$.  Note that taking $r \to 0$, one recovers a generalization of the usual three function Prekopa-Leindler inequality.




Note also that, in Slomka \cite[Theorem 1.2]{slomka2024remark}, a similar result for $\alpha=\beta=1$ is obtained in the discrete setting, but with exponents
$\alpha_1, \alpha_2, \alpha_3, \alpha_4$ associated to $f_1, f_2, f_3, f_4$ satisfying $\max(\alpha_1,\alpha_2) \leq \min(\alpha_3,\alpha_4)$, which, for us, would force $m=r=1/2$. Owing to the continuous setting's simpler computations we attain a more general formulation here and an interpolation between the usual three and a four function variant of Prekopa-Leindler.  In the discrete setting three function versions of Prekopa-Leindler inequalities seem to be more delicate, see \cite{marsiglietti2024geometric} for a partial result on $\mathbb{Z}$ as well as \cite{malliaris2025functional} which reduces the functional problem a geometric question.
For a four function version of Prekopa-Leindler inequality
on $\mathbb{Z}$ we refer the reader to \cite{KL19,GRST21,HKS21,slomka2024remark}.

Via standard arguments for Prekopa-Leindler, the inequalities in the theorem tensorize.  For instance, if one defines for a vector $\alpha = (\alpha_1, \dots, \alpha_n)$, $$M_\alpha^s(x,y) \coloneqq (M_{\alpha_1}^s(x_1,y_1), \dots, M_{\alpha_n}^s(x_n,y_n)).$$ one  can easily (through inductive arguments) obtain more complicated extensions of the above.  For brevity of exposition we will expand on this observation only to note that it suffices to prove the result when $d = 1$.

\begin{proof}[Proof of Theorem \ref{thm:cordero-maurey-general}]
The proof follows the same line as in Section \ref{sec: Transport}.  We need some preparations.  
Let $\nu_1,\nu_2$ be two probability measures on the line, absolutely continuous with respect to the Lesbesgue measure, and with density $n_1,n_2 \colon \mathbb{R} \to (0,\infty)$
(we assume for simplicity that densities are positive).
Denote $F_1(x)=\int_{-\infty}^x d\nu_1$, $F_2(x)=\int_{-\infty}^x d\nu_2$, $x \in \mathbb{R}$, the associated cumulative distribution function that are increasing so that their inverse $F_1^{-1}$ and $F_2^{-1}$ are well-defined. Recall that $T=F_2^{-1} \circ F_1$ is the transport map that pushes forward $\nu_2$ onto $\nu_1=T\#\nu_2$, meaning that, for any measurable function $f$ on the line
$$
\int f(T) d\nu_2 = \int f d\nu_1 .
$$
As already presented, the transport coupling is then 
$$
\pi(dx,dy) = \nu_1(dx) \delta_{T(x)}(dy)
$$
where $\delta$ denotes the Dirac mass, and recall that the first marginal of $\pi$ is $\nu_1$ and its second marginal $\nu_2$.

Let $m_3(x,y)=M_\alpha^s(x,y)$, $m_4(x,y)=M_\beta^t(x,y)$.
Define the corresponding pushforward, $\nu_3=m_3\# \pi$, $\nu_4 = m_4 \# \pi$ with densities $n_3$ and $n_4$, $\pi$ being the optimal coupling between $\nu_1$ and $\nu_2$. Set $H_3(x)=m_3(x,T(x))$, $H_4(x)=m_4(x,T(x))$ and observe that they are one to one increasing since $T$ is increasing. Furthermore, it is easy to see that
$n_i(H_i)=n_1/H_i'$, $i=3, 4$.

We follow again \cite{GRST21}.
By definition of the relative entropy and the coupling $\pi$, it holds
\begin{align*}
r H(\nu_3|\lambda) & + (1-r) H(\nu_4|\lambda) 
 =
\int r \log n_3 d\nu_3 + \int (1-r)\log n_4 d\nu_4 \\
& =
\iint r \log n_3(m_3(x,y)) \pi(dx,dy)   
+
\iint (1-r) \log n_4(m_4(x,y)) \pi(dx,dy) \\
& =
\iint \log \left[n_3(m_3(x,y))^{r} n_4(m_4(x,y))^{1-r} \right] \pi(dx,dy) .
\end{align*}
Similarly, 
\begin{align*}
mH(\nu_1|\lambda) + (1-m) H(\nu_2|\lambda) 
& =
\int m\log n_1 d\nu_1 + \int (1-m) \log n_2 d\nu_2 \\
& =
\iint \log \left[n_1(x)^{m} n_2(y)^{1-m} \right] \pi(dx,dy)   .
\end{align*}
We want to prove that
$$
\iint \log \left( \frac{n_3(m_3(x,y))^{r} n_4(m_4(x,y))^{1-r}}{n_1(x)^{m}n_2(y)^{1-m} } \right) \pi(dx,dy) \leq 0
$$
This would be a consequence, by Jensen's inequality, of 
$$
H 
\coloneqq
\iint \left( \frac{n_3(m_3(x,y))^{r} n_4(m_4(x,y))^{1-r}}{n_1(x)^{m}n_2(y)^{1-m} } \right) \pi(dx,dy) 
\leq 1 .
$$ 
By definition of $\pi$, $m_3$ and $m_4$, it holds
$$
H = \int_\mathbb{R} \frac{n_3 (H_3(x))^{r} n_4(H_4(x))^{1-r} }{n_1(x)^{m} n_2(T(x))^{1-m}} n_1(x)dx .
$$
By the explicit expression of $n_3$, $n_{4}$, it follows that
\begin{align*}
H 
& = 
\int \frac{n_1(x)^{2-m} }{H_3'(x)^{r} H_{4}'(x)^{1-r} n_2(T(x))^{1-m}} dx .
\end{align*}
We claim that $H_3'(x) \geq T'(x)^{1-s}$. Assume first that $\alpha \in (0,1]$.
By the arithmetic geometric mean applied twice (note that $0 < \alpha \leq 1$ ensures that $\frac{1}{\alpha}-1 \geq 0$) it holds
\begin{align*}
H_3'(x) 
& = 
[sx^{\alpha-1} + (1-s) T'(x)T(x)^{\alpha-1}] [sx^\alpha+(1-s)T(x)^\alpha]^{(\frac{1}{\alpha}-1)
}   \\
& \geq 
x^{(\alpha-1)s} T'(x)^{1-s} T(x)^{(\alpha-1)(1-s)} [x^{\alpha s} T(x)^{\alpha(1-s)}]^{(\frac{1}{\alpha}-1)}  \\
& = T'(x)^{1-s} 
\end{align*}
which is the claim. The case $\alpha = 0$ can be treated similarly.

Similarly $H_4'(x) \geq T'(x)^{1-t}$. Therefore, since $m=sr+(1-r)t$ (that can be recast as
$1-m=r(1-s)+(1-r)(1-t)$), it holds
$$
H_3'(x)^{r} H_{4}'(x)^{1-r} \geq T'(x)^{1-m} .
$$
Hence, since $T'(x)=n_1(x)/n_2(T(x))$ (by construction), we end up with
$$
H \leq \int n_1(x) dx = 1 .
$$
This proves that 
$$
(1-r)H(\nu_3|\lambda) + r H(\nu_4|\lambda) \leq (1-m) H(\nu_1|\lambda) + m H(\nu_2|\lambda) .
$$
Now,
\begin{align*}
   m\int \log f_1 d\nu_1 & + (1-m) \int \log f_2 d\nu_2 
 =
   \iint m\log f_1 (x) + (1-m) \log f_2 (y) \pi(dx,dy) \\
   & \leq 
      \iint r \log f_3 (m_3(x,y)) + (1-r) \log f_4 (m_3(x,y)) \pi(dx,dy) \\
      & =
      \int r\log f_3 d\nu_3 + (1-r) \int \log f_4 d\nu_4 .
   \end{align*}
Subtracting the above inequality involving the 4 entropy's, we end up with
\begin{align*}
  & m \left[\int \log f_1 d\nu_1 - H(\nu_1 |\lambda) \right] 
    + 
  (1-m) \left[ \int \log f_2  d\nu_2 - H(\nu_2 |\lambda) \right] \\
  & \leq 
  r \left[\int \log f_3 d\nu_3 - H(\nu_3 |\lambda) \right] 
   + 
  (1-r) \left[ \int \log f_4  d\nu_4 - H(\nu_4 |\lambda) \right] .
\end{align*}
Recall the dual expression of the log-Laplace transform
$$
\log \int e^f d\lambda = \sup_{\nu} \left\{ \int fd\nu - H(\nu |\lambda) \right\}
$$
where the supremum is running over all probability measures on the line.
Applying this formula first on the right hand side, and then on the left hand side 
we end up with
$$
m \log \int e^{f_1} d\lambda 
+ (1-m) \log \int e^{f_2} d\lambda 
\leq 
r\log \int e^{f_3} d\lambda 
+
(1-r) \log \int e^{f_4} d\lambda 
$$
that can be recast as
$$
\left( \int e^{f_1} d\lambda \right)^{m} \left( \int e^{f_2} d\lambda  \right)^{1-m} \leq 
\left( \int e^{f_3} d\lambda \right)^{r} \left( \int e^{f_4} d\lambda  \right)^{1-r} 
$$
as expected.
This ends the proof of Theorem \ref{thm:cordero-maurey-general}.
\end{proof}

{\bf{Acknowledgements:}} The second author's work was supported in part by SECIHTI grant CBF-2024-2024-3907.


\bibliographystyle{plain}
\bibliography{LSM} 

\end{document}